\documentclass[12pt,reqno]{amsart}

\usepackage[T1]{fontenc}
\usepackage[utf8]{inputenc}
\usepackage{amsmath,amssymb,mathtools}
\usepackage[margin=1.25in]{geometry}
\usepackage{microtype}

\usepackage{comment}
\usepackage[
    colorlinks=true,
    linkcolor=black,
    citecolor=black,
    urlcolor=blue
]{hyperref}

\newtheorem{theorem}{Theorem}[section]
\newtheorem{proposition}[theorem]{Proposition}
\newtheorem{lemma}[theorem]{Lemma}

\theoremstyle{definition}
\newtheorem{definition}[theorem]{Definition}
\theoremstyle{remark}

\numberwithin{equation}{section}

\newcommand{\R}{\mathbb{R}}
\newcommand{\Sym}{\operatorname{Sym}}
\newcommand{\tr}{\operatorname{tr}}
\newcommand{\Arg}{\operatorname{Arg}}
\newcommand{\sgn}{\operatorname{sgn}}
\newcommand{\1}{\mathbf{1}}
\newcommand{\cA}{c_{\alpha}}
\newcommand{\phase}{\mathcal{F}}

\allowdisplaybreaks
\title[Nonuniqueness for Lagrangian mean curvature equation]
{Nonuniqueness of Solutions to the Lagrangian Mean Curvature Equation}

\author{Arunima Bhattacharya}
\address{Department of Mathematics, Phillips Hall, The University of North Carolina at Chapel Hill, Chapel Hill, NC 27599}
\email{arunimab@unc.edu}

\author{W. Jacob Ogden}
\address{Department of Mathematics, Mathematics Hall, Columbia University, New York, NY 10027}
\email{wjo2114@columbia.edu}

\dedicatory{(Dedicated to F. Reese Harvey and H. Blaine Lawson, Jr.,
on the occasion of their 85th birthdays)}

\begin{document}

\begin{abstract}
We resolve a question posed by Harvey and Lawson concerning the uniqueness of continuous viscosity solutions to the Dirichlet problem for the Lagrangian mean curvature equation on the unit ball with continuous boundary data.
 For every dimension $n\geq 2$, we construct a continuous phase
$\psi\colon \overline{B_1}\to(-n\frac{\pi}{2},n\frac{\pi}{2})$ and continuous boundary data
$g\colon \partial B_1\to\R$ for which the Dirichlet problem
\[
 \sum_{i=1}^n \arctan\lambda_i(D^2u)=\psi(x)\quad\text{in }B_1,
 \qquad u=g\quad\text{on }\partial B_1,
\]
admits a continuum of distinct continuous viscosity solutions.   
\end{abstract}

\maketitle

\section{Introduction}

Let $\Omega\subset\R^n$ be a bounded domain.  We consider the Dirichlet problem
for the Lagrangian mean curvature equation
\begin{equation}\label{DP}
 \begin{cases}
 \displaystyle \phase_n(D^2u):=\sum_{i=1}^n\arctan\lambda_i(D^2u)=\psi(x)
 &\text{in }\Omega,\\[4pt]
 u=g&\text{on }\partial\Omega,
 \end{cases}
\end{equation}
where $\lambda_1(D^2u),\dots,\lambda_n(D^2u)$ are the eigenvalues of the
Hessian, $\psi$ is the prescribed Lagrangian phase, and $g$ is the boundary
value.  Our main result gives a negative answer to the uniqueness question for
continuous viscosity solutions in every dimension $n\geq 2$. In dimensions
$n\geq 3$, the phase can be chosen to cross any of the special values $-(n-2m) \frac \pi 2$, where $m\in\{1,\dots,n-1\}$, at the origin.

\begin{theorem}\label{thm:DP}
For every $n\geq2$, there exist
\[
 \psi_n\in C\big(\overline{B_1};(-n\frac{\pi}{2},n\frac{\pi}{2})\big),
 \qquad g_n\in C(\partial B_1),
\]
such that \eqref{DP}, with $\Omega=B_1$, admits a continuum of
distinct solutions in $C(\overline{B_1})$ in the viscosity sense.

Moreover, for every $n\geq3$ and every $m\in\{1,\dots,n-1\}$, the phase $\psi_n$ and boundary value $g_n$ may be chosen so that
\begin{equation}\label{CriticalCross}
 \psi_{n}(0)=-(n-2m)\frac{\pi}{2},
 \qquad
 \psi_{n}(te_1)<-(n-2m)\frac{\pi}{2}<\psi_{n}(te_n)
\end{equation}
for all sufficiently small $t>0$.  In particular, $\psi_{n}$ crosses the prescribed special value.
\end{theorem}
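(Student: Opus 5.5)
The plan is to build the continuum of solutions explicitly, by separating variables
\[
u(x)=\varphi(x_1)+Q(x'),\qquad x'=(x_2,\dots,x_n),
\]
so that $\phase_n(D^2u)=\arctan\varphi''(x_1)+\phase_{n-1}(D^2Q)$, and then putting all the nonuniqueness into the one-variable factor $\varphi$. This is a construction I have not checked in detail, and it contains one gap I cannot yet close (the boundary data, third paragraph).

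\textbf{One-dimensional mechanism.} Choose a continuous profile $f$ on $[-1,1]$ with $f(0)=\frac{\pi}{2}$ and $f<\frac{\pi}{2}$ elsewhere. Require that $\tan f(x_1)$ blows up fast enough near $0$ (say like $|x_1|^{-1}$ or faster) that $\varphi''=\tan f$ cannot be integrated through the origin. Solving $\varphi''=\tan f$ separately on $(-1,0)$ and $(0,1)$, and gluing continuously at $0$, then gives a family $\varphi_s$ with $\varphi_s'(0^+)-\varphi_s'(0^-)=s$ arbitrary; if the derivatives diverge, $s$ instead indexes the renormalized mismatch.

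\textbf{Viscosity verification at the seam.} Set $\psi(x)=f(x_1)+\phase_{n-1}(D^2Q)$, where $Q$ is a smooth function. Away from $\{x_1=0\}$ each $u_s=\varphi_s+Q$ is a classical solution. On the seam only the convex kink matters:
\begin{itemize}
\item no $C^2$ function touches $u_s$ from above there, so the subsolution test is vacuous;
\item any paraboloid $P$ touching from below satisfies $\arctan P_{11}<\frac{\pi}{2}=f(0)$ in the $x_1$-direction, while its restriction to the seam is controlled by $D^2Q$.
\end{itemize}
Hence $\phase_n(D^2P)\le\psi$, and the supersolution test holds. The step that must be done carefully is handling a general $P$ whose Hessian mixes $x_1$ with $x'$. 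I would compare the eigenvalues of $D^2P$ with those of the block matrix $\mathrm{diag}(+\infty,D^2Q)$ and use monotonicity of $\phase_n$ to conclude.

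\textbf{Boundary data (main obstacle).} Since $u_s$ depends on $s$ throughout $\{|x_1|<1\}$, the values $u_s|_{\partial B_1}$ would a priori depend on $s$. I would first try to make all $u_s$ agree near $\partial B_1$, so that their difference is compactly supported. Concretely, I would let the blow-up set of $f$ be two hyperplanes $x_1=\pm a$, or more generally a closed hypersurface inside $B_1$. The freedom in the kinks then gives two or more parameters, and one of them can be solved for in terms of the other to keep the solution unchanged outside a compact set. If this fails, I would instead compare against a fixed outer solution on an annulus and glue along a second convex seam, where the same viscosity argument applies.

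\textbf{Range of the phase and the crossing condition.} Choose $Q$ to be a quadratic form on $\R^{n-1}$ with $m-1$ large positive eigenvalues and $n-m$ large negative eigenvalues, modified by a small $x'$-dependent perturbation. Then $\phase_{n-1}(D^2Q)$ is near $(m-1)\frac{\pi}{2}-(n-m)\frac{\pi}{2}$. Adding $f(0)=\frac{\pi}{2}$ gives
\[
\psi(0)=(m-1)\tfrac{\pi}{2}-(n-m)\tfrac{\pi}{2}+\tfrac{\pi}{2}=-(n-2m)\tfrac{\pi}{2}.
\]
Because the eigenvalues of $D^2Q$ are finite, $\psi$ stays strictly inside $(-n\frac{\pi}{2},n\frac{\pi}{2})$ and is continuous. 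Along $te_1$ we have $f<\frac{\pi}{2}$, which gives $\psi(te_1)<-(n-2m)\frac{\pi}{2}$. Along $te_n$ the perturbation is chosen to push the phase up, giving $\psi(te_n)>-(n-2m)\frac{\pi}{2}$; this yields \eqref{CriticalCross}. For $n=2$ the same construction with $Q$ a one-variable quadratic gives the first claim of Theorem~\ref{thm:DP}.
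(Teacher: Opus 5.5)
\paragraph{The boundary data.}
The decisive gap is the boundary data, and it cannot be closed within your separated ansatz.

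For any two parameters, $u_s-u_{s'}=\varphi_s(x_1)-\varphi_{s'}(x_1)$ depends on $x_1$ alone. For every $c\in[-1,1]$ the sphere $\partial B_1$ contains $(c,\sqrt{1-c^2},0,\dots,0)$. So equal boundary values force $\varphi_s\equiv\varphi_{s'}$ on $[-1,1]$, i.e.\ $u_s\equiv u_{s'}$. Seams at $x_1=\pm a$ change nothing, because a product solution that is unchanged near $\partial B_1$ is unchanged everywhere.

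Your fallback (a closed seam, or gluing to an outer solution) abandons the product structure, and that is where all the work lies. If the family is to share the classical phase, the differences should be affine off the singular set. An affine function vanishing on a closed hypersurface is zero, so the difference must be \emph{piecewise} affine with additional interior seams.

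This is the paper's key idea. It adds the tent $C(R-\rho)^+$, $\rho=\sum_j|x_j|$, whose kink on the closed surface $\{\rho=R\}$ is convex. It builds $v$ so that $D^2v$ blows up to $+\infty$ normally to that surface (the term $|\rho-R|^\alpha$). The phase trace there is then $\frac{\pi}{2}+\phase_{n-1}(D_T^2v)$, and interlacing gives the supersolution test.

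The tent also creates concave kinks $-C|x_j|$ on the coordinate hyperplanes inside $\{\rho<R\}$. These are dominated on positive hyperplanes by the large corners $+L|x_j|$, and on negative ones they only reinforce $-L|x_j|$. The real difficulty is then at the intersections of strata. The paper handles it with the rank-one determinant formula (Lemma~\ref{lem:rank-one-formula}), which gives continuity of $\psi$ there, and with a corner analysis showing that mixed strata admit no tests. Your plan supplies none of this.

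\paragraph{The seam mechanism.}
Your seam mechanism also has the wrong sign. If $\varphi''=\tan f>0$ is not integrable at $0$, then $\varphi'$ increases on each side with $\varphi'(0^+)=-\infty$ and $\varphi'(0^-)=+\infty$. An example is $\varphi=|x_1|\log|x_1|$ plus any continuous piecewise linear function; for blow-up like $|x_1|^{-2}$, $\varphi$ is not even bounded.

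This is a \emph{concave} cusp. The function $\varphi_s(0)+px_1-Mx_1^2+Q(x')$ touches $u_s$ from above for every $M$, and
$\arctan(-2M)+\phase_{n-1}(D^2Q)<\frac{\pi}{2}+\phase_{n-1}(D^2Q)=\psi$.
So the subsolution test fails, and your claim that no upper test exists is false.

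What works is integrable blow-up, e.g.\ $\varphi''\sim|x_1|^{\alpha-2}$ with $1<\alpha<2$. Then $\varphi'$ is continuous and the free parameter is a convex kink $s|x_1|$ with $s\geq0$. Upper tests are excluded, and your interlacing argument handles lower tests.

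Finally, ``near'' does not give $\psi(0)=-(n-2m)\frac{\pi}{2}$ exactly. For $m=1$ you would need $\phase_{n-1}(D^2Q(0))=-(n-1)\frac{\pi}{2}$, which no finite Hessian attains.
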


When the phase, $\psi$, is constant, \eqref{DP} is the special
Lagrangian equation introduced by Harvey-Lawson in their theory
of calibrated geometries \cite{HL82}. 
For classical solutions $u$, constant phase implies the gradient graph 
\[
 \Gamma_u=\{(x,Du(x)):x\in\Omega\}\subset\R^n\times\R^n
\]
is a calibrated, hence volume-minimizing, submanifold of $\R^n \times \R^n$. More generally, when $u$ is a smooth solution with variable phase $\psi$, the Lagrangian graph $\Gamma_u$ has Lagrangian angle or phase $\psi$
and mean curvature 
$\vec H=J\nabla_g\psi$, where 
$g = I + (D^2u )^2 $ is the induced metric and $J$ is the standard complex structure on $\mathbb{C}^n = \R^n \times \R^n$ \cite[III (2.19)]{HL82}.

The threshold $\pm(n-2)\frac{\pi}{2}$ is called the critical phase. It was shown by Yuan \cite{Yuan06} that the level sets of the arctangent operator are convex when the absolute value of the phase is larger than the critical value. When the absolute value of the phase lies strictly below this critical value, the arctangent operator loses all convexity properties. Also, note that the arctangent operator is not uniformly elliptic unless there is a uniform bound on the Hessian. 

In the critical and supercritical range, i.e., $|\psi|\geq (n-2)\frac{\pi}{2}$, regularity for the special Lagrangian equation has been extensively studied. In this range, in 
dimension two and three, Warren-Yuan obtained explicit interior Hessian and gradient
estimates 
\cite{WarrenYuan09,WarrenYuan10}.  Wang-Yuan extended the interior Hessian estimate to
critical and supercritical phases in every dimension \cite{WangYuan14}.  For
convex potentials, estimates and regularity were developed by Bao-Chen
\cite{BaoChen03}, Chen-Warren-Yuan \cite{ChenWarrenYuan09}, and Chen-Shankar-Yuan \cite{ChenShankarYuan23}.  Further approaches include Li's
compactness argument \cite{Li19} and Shankar's doubling proof
\cite{Shankar26}.  Constant-rank and Liouville results for saddle solutions beyond the supercritical range
under a semiconvexity hypothesis were obtained by Ogden-Yuan
\cite{OgdenYuan25}.

When the phase is variable, substantially more information about the phase is
needed.  Warren obtained Hessian estimates for convex smooth solutions with
$C^{1,1}$ phase \cite{Warren08}.  For critical and supercritical $C^{1,1}$ phases,
Bhattacharya established interior Hessian estimates \cite{Bhattacharya21, Bhattacharya22}; Bhattacharya-Mooney-Shankar proved the complementary gradient estimate for $C^2$ phases
\cite{BhattacharyaMooneyShankar}.  Together, these estimates yield interior $C^3$
regularity and solvability of the Dirichlet problem for \eqref{DP} with $C^{1,1}$ phase and continuous boundary
data on a uniformly convex bounded domain \cite{BhattacharyaMooneyShankar, Bhattacharya24}; see also Lu \cite{Lu23}. For convex viscosity
solutions, interior regularity and sharp regularity criteria were obtained by
Bhattacharya-Shankar \cite{BhattacharyaShankar23,BhattacharyaShankar24}.
Recent work of Zhou \cite{Zhou25} and Ding \cite{Ding24} establishes Hessian estimates under a Lipschitz phase assumption when the phase is strictly or weakly supercritical, respectively, but the corresponding gradient estimates remain open.

The restrictions on the phase range above are natural for regularity theory.  In the subcritical range, i.e., $|\psi|< (n-2)\frac{\pi}{2}$, Nadirashvili-Vl\u{a}du\c{t} \cite{NadirashviliVladut} constructed $C^{1,\frac{1}{3}}$ solutions in dimension three, while Wang-Yuan \cite{WangYuan13} proved the existence of solutions $u \in C^{1,\frac{1}{2m+1}} \setminus C^{1,\frac{1}{2m+1}+\varepsilon}$ for every integer $m \geq 1$ in dimension three.  Mooney-Savin later produced Lipschitz viscosity
solutions that are not $C^1$ \cite{MooneySavin}, and Mooney-Shankar obtained
sharp regularity results for semiconvex viscosity solutions accompanied by singular examples
\cite{MooneyShankar25}.

In \cite{HL09}, Harvey-Lawson developed the Dirichlet theory for degenerate elliptic equations of the form $F(D^2u)=0$ and established comparison and uniqueness for the special Lagrangian equation, together with existence under the corresponding strict boundary convexity assumptions (see also \cite{YnotesSummer08} and \cite[Theorem 1.2]{Bhattacharya24}). In \cite{BrW}, Brendle-Warren studied the second boundary value problem for the special Lagrangian equation.

For variable $C^{1,1}$ phase in the supercritical
range, the Dirichlet problem was treated under subsolution existence
hypotheses by Collins-Picard-Wu \cite{CPW17} and in the continuous
viscosity setting by Dinew-Do-T{\^o} \cite{DinewDoTo}; see also
\cite{Bhattacharya24} for a direct treatment with continuous boundary data on a uniformly convex bounded domain.

Beyond the top phase interval, $\pm ((n-2)\frac{\pi}{2}, n\frac{\pi}{2})$, however, available comparison theory is
formulated interval by interval.  The dividing values, or the so-called special values, are
\begin{equation*}\label{eq:special-values}
 \theta_k=(n-2k)\frac{\pi}{2},\qquad k=1,\dots,n-1,
\end{equation*}
and the intervening phase intervals are
\begin{equation*} \label{intervals}
 I_k=\left((n-2k)\frac{\pi}{2},(n-2(k-1))\frac{\pi}{2}\right),
 \qquad k=1,\dots,n.
\end{equation*}
For $n\geq3$, the outer special values $\theta_1$ and $\theta_{n-1}$ are the
critical phases discussed above, while the intermediate special values lie
strictly in the subcritical range.  These values are distinguished by the
asymptotic behavior of the operator.  Indeed, if $k$ eigenvalues of a sequence
of Hessians tend to $-\infty$ and the remaining $n-k$ tend to $+\infty$, then
the corresponding phases converge to
$(n-2k)\frac{\pi}{2}=\theta_k$.  Thus the $\theta_k$ are precisely the values
approached by mixed-sign configurations in which every Hessian eigenvalue
diverges in magnitude.

In \cite{HL19}, Harvey-Lawson introduced a condition on the arctangent operator known as “tameness.” Slightly stronger than strict ellipticity, this condition is sufficient to establish comparison.
Building on the comparison theorem of Cirant-Payne \cite{CirantPayne}, which builds on a theory of Trudinger \cite{True}, Harvey-Lawson proved Dirichlet solvability in \cite{HL21}, under the corresponding strict boundary convexity hypothesis, whenever the range of $\psi$ is contained in one interval $I_k$. In the same paper, they explicitly identified the
extension to a general continuous phase, allowed to attain or cross the values
\eqref{eq:special-values}, as an open problem.

Brustad subsequently showed that the phase-interval hypothesis cannot, in
general, be omitted from comparison. For each special value $\theta_k$, Brustad constructed a phase function crossing $\theta_k$ and a viscosity subsolution and supersolution whose
difference has a strict interior maximum, and hence comparison fails
\cite{Brustad24}. This result does not disprove uniqueness for the Dirichlet
problem, since the subsolution and supersolution in the construction are not
solutions with the given boundary data. Theorem~\ref{thm:DP} establishes the
corresponding nonuniqueness statement: a single continuous phase and a
single continuous boundary datum may produce a continuum of distinct
continuous viscosity solutions. We also note that Cao-Wang constructed
infinitely many $C^{1,\alpha}$ solutions in dimension two by convex
integration \cite{CaoWang}. Their result is formulated in a different weak
solution class and does not address uniqueness in the viscosity setting.

The distinction between viscosity solutions and Cao-Wang's very weak solutions is important for the Dirichlet problem.  Comparison is the
mechanism behind uniqueness, stability under approximation, and Perron's
method for viscosity solutions \cite{CIL}.  The present examples show that,
once the phase crosses a special value, the Dirichlet problem is genuinely ill-posed in
the canonical continuous viscosity solution class.

\subsection*{Outline of the Proof}We briefly describe our construction.  In dimension two, several singular
interfaces must be balanced so that one Hessian eigenvalue tends to $+\infty$
and the other to $-\infty$ at the codimension-two junctions.  This keeps the
extended phase inside $(-\pi,\pi)$.  In dimensions $n\geq3$, a simpler model is
available.  We set
\[
 \rho(x)=\sum_{i=1}^n|x_i|,
 \qquad
 v(x)=\sum_{j=1}^m|x_j|^\alpha-\sum_{j=m+1}^n|x_j|^\alpha+|\rho(x)-R|^\alpha,
 \qquad 1<\alpha<2,
\]
and define $\psi=\phase_n(D^2v)$ away from the singular skeleton.  A rank-one
determinant formula shows that $\psi$ extends continuously across every
stratum.  We then add a one-parameter family of piecewise affine functions,
including a compactly supported tent $C(R-\rho)^+$.  These additions have zero
Hessian off the skeleton, and the $C$-dependent tent vanishes at the boundary; hence every member of the resulting
family has the same classical phase away from the singular set and the same boundary values.  On the singular set, convex and concave cusps either rule out
one or both kinds of test functions, while Cauchy interlacing verifies the
remaining viscosity inequalities.  Finally, an expansion of the phase along
the $x_1$- and $x_n$-axes gives \eqref{CriticalCross}.

\subsection*{Organization} In Section \ref{sec:prelim}, we present the viscosity convention and the elementary
linear algebra used throughout. In Section \ref{sec:two-dim}, we show
the two-dimensional construction.  In Section \ref{sec:higher-dim}, we prove the result
in every dimension $n\geq3$.

\subsection*{Acknowledgments} 
AB is especially grateful to Reese Harvey and Blaine Lawson for bringing the difficulty of this problem to her attention six years ago and for generously sharing their insights and notes with her, which clarified the obstacles to establishing a comparison principle. AB and WJO would like to thank Yu Yuan for helpful discussions. AB acknowledges the support of NSF grant DMS-2350290, Simons Foundation grant MPS-TSM-00002933, and a Bill Guthridge Fellowship from UNC-Chapel Hill.

\section{Preliminaries}\label{sec:prelim}
For $A\in\Sym^2(\R^m)$, let \[\phase_m(A)=\sum_{i=1}^m\arctan\lambda_i(A)\] and
 $\phase_0=0.$ The operator is continuous and strictly increasing in the matrix order.  We
recall the following viscosity sub- and supersolution definitions.

\begin{definition}\label{def:viscosity}
A function $u\in C(\Omega)$ is a viscosity subsolution of
$\phase_n(D^2w)=\psi$ if, whenever $\varphi\in C^2$ touches $u$ from above at
$x_0\in\Omega$, one has
\[
 \phase_n(D^2\varphi(x_0))\geq\psi(x_0).
\]
It is a viscosity supersolution if, whenever $\varphi$ touches $u$ from below
at $x_0$, one has
\[
 \phase_n(D^2\varphi(x_0))\leq\psi(x_0).
\]
If $u$ is both a viscosity subsolution and supersolution, then $u$ is a viscosity solution. 
\end{definition}

We first record the phase inequalities for compressions of a symmetric matrix.
If $T\subset\R^m$ is a linear subspace, $A|_T$ denotes the compression of $A$
to $T$.

\begin{lemma}\label{lem:compression}
Let $A\in\Sym^2(\R^m)$ and let $T\subset\R^m$ have codimension $k$.  Then
\begin{equation*}
 -k\frac{\pi}{2}+\phase_{m-k}(A|_T)
 \leq \phase_m(A)
 \leq k\frac{\pi}{2}+\phase_{m-k}(A|_T).
\end{equation*}
\end{lemma}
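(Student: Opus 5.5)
We argue by Cauchy interlacing, after reducing to the case $k=1$ by induction. Given a subspace $T$ of codimension $k$, we choose a flag $T=T_k\subset T_{k-1}\subset\dots\subset T_0=\R^m$ with each $T_{j}$ of codimension one in $T_{j-1}$. The compression of $A|_{T_{j-1}}$ to $T_j$ is $A|_{T_j}$, since compression is just restriction of the quadratic form. So if the codimension-one inequality
\[
 -\frac{\pi}{2}+\phase_{p-1}(B|_S)\leq \phase_p(B)\leq \frac{\pi}{2}+\phase_{p-1}(B|_S)
\]
holds for every $B\in\Sym^2(\R^p)$ and hyperplane $S$, then chaining it $k$ times along the flag gives the stated bounds. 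The degenerate case $k=m$, where $T=\{0\}$ and $\phase_0=0$, reduces to $|\phase_m(A)|\le m\frac{\pi}{2}$, which is immediate.

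For $k=1$, let $\lambda_1\le\dots\le\lambda_p$ be the eigenvalues of $B$ and $\mu_1\le\dots\le\mu_{p-1}$ those of $B|_S$. Choosing an orthonormal basis adapted to $S\oplus S^\perp$ shows that $B|_S$ is a principal $(p-1)\times(p-1)$ submatrix of $B$. Cauchy interlacing then gives
\[
 \lambda_i\le\mu_i\le\lambda_{i+1},\qquad i=1,\dots,p-1.
\]
Since $\arctan$ is increasing, we get
\[
 \sum_{i=1}^{p-1}\arctan\lambda_i\le\phase_{p-1}(B|_S)\le\sum_{i=2}^{p}\arctan\lambda_i .
\]

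For the upper bound, we have $\phase_p(B)=\sum_{i=2}^p\arctan\lambda_i+\arctan\lambda_1$. Using the left interlacing inequality,
\[
 \phase_p(B)\ge\phase_{p-1}(B|_S)+\arctan\lambda_p-\arctan\lambda_p+\dots,
\]
so it is cleaner to write $\phase_p(B)=\sum_{i=1}^{p-1}\arctan\lambda_i+\arctan\lambda_p\le\phase_{p-1}(B|_S)+\arctan\lambda_p<\phase_{p-1}(B|_S)+\frac{\pi}{2}$. For the lower bound, similarly, $\phase_p(B)=\sum_{i=2}^{p}\arctan\lambda_i+\arctan\lambda_1\ge\phase_{p-1}(B|_S)+\arctan\lambda_1>\phase_{p-1}(B|_S)-\frac{\pi}{2}$. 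This proves the codimension-one case with strict inequalities, and hence the lemma.

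The only step needing care is the identification of the compression with a principal submatrix in an adapted orthonormal basis, so that interlacing applies. The ordering bookkeeping is routine. There is no real obstacle beyond this.
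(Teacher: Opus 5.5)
Your proof is correct and is essentially the paper's argument: both rest on Cauchy interlacing together with $|\arctan|<\frac{\pi}{2}$. The paper applies codimension-$k$ interlacing $\lambda_j\ge\mu_j\ge\lambda_{j+k}$ in one step, while you iterate the codimension-one case along a flag (which is how the codimension-$k$ version is usually proved anyway); the abandoned line beginning ``Using the left interlacing inequality'' is a harmless false start and should just be deleted.
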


\begin{proof}
Let $\lambda_1\geq\cdots\geq\lambda_m$ be the eigenvalues of $A$ and let
$\mu_1\geq\cdots\geq\mu_{m-k}$ be those of $A|_T$.  Cauchy interlacing gives
$\lambda_j\geq\mu_j\geq\lambda_{j+k}$.  Since $\arctan$ is increasing and
takes values in $(-\frac{\pi}{2},\frac{\pi}{2})$,
\[
 \sum_{j=1}^m\arctan\lambda_j
 \leq k\frac{\pi}{2}+\sum_{j=1}^{m-k}\arctan\mu_j,
\]
and the lower inequality follows in the same way.
\end{proof}

The next identity is a rank-one phase formula, which is the main tool for extending the higher-dimensional phase.

\begin{lemma}\label{lem:rank-one-formula}
Let $D=\operatorname{diag}(d_1,\dots,d_m)$, let
$\sigma\in\{\pm1\}^m$, and let $b>0$.  Define
\[
 P=\sum_{j=1}^m\frac{1}{1+d_j^2},
 \qquad
 Q=\sum_{j=1}^m\frac{d_j}{1+d_j^2}.
\]
Then
\begin{equation}\label{eq:rank-one-phase}
 \phase_m(D+b\,\sigma\otimes\sigma)
 =\sum_{j=1}^m\arctan d_j
 +\Arg(1+bQ+i bP),
\end{equation}
where the argument of the second term is the value in $(0,\pi)$.
\end{lemma}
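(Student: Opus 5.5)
We prove \eqref{eq:rank-one-phase} by computing the argument of a complex determinant. For any symmetric $A$ with eigenvalues $\lambda_j$, we have $\det(I+iA)=\prod_j(1+i\lambda_j)$. Since each factor has argument $\arctan\lambda_j\in(-\frac\pi2,\frac\pi2)$, this gives
\[
\det(I+iA)=\Big(\prod_j\sqrt{1+\lambda_j^2}\Big)\,e^{i\phase_m(A)}.
\]
Hence $\phase_m(A)$ is a continuous determination of $\arg\det(I+iA)$. We apply this with $A=D+b\,\sigma\otimes\sigma$.

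\textbf{Step 1: matrix determinant lemma.} Since $I+iD$ is invertible and diagonal,
\[
\det(I+iD+ib\,\sigma\sigma^T)=\det(I+iD)\,\bigl(1+ib\,\sigma^T(I+iD)^{-1}\sigma\bigr).
\]
Because $\sigma_j^2=1$,
\[
\sigma^T(I+iD)^{-1}\sigma=\sum_j\frac{1}{1+id_j}=\sum_j\frac{1-id_j}{1+d_j^2}=P-iQ.
\]
Therefore the second factor is $1+ib(P-iQ)=1+bQ+ibP$. Since $b>0$ and $P>0$, this number lies in the open upper half-plane, so its argument has a unique value in $(0,\pi)$. Taking arguments, $\arg\det(I+iD)=\sum_j\arctan d_j$, and so
\[
\phase_m(D+b\sigma\otimes\sigma)\equiv\sum_j\arctan d_j+\Arg(1+bQ+ibP)\pmod{2\pi}.
\]

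\textbf{Step 2: removing the $2\pi$ ambiguity.} This is the only real issue. We use a continuity argument in $s\in[0,b]$, replacing $b$ by $s$. Define
\[
L(s)=\phase_m(D+s\sigma\otimes\sigma),\qquad R(s)=\sum_j\arctan d_j+\Arg(1+sQ+isP).
\]
For $s>0$, the number $1+sQ+isP$ stays in the open upper half-plane, so the principal argument with values in $(0,\pi)$ is continuous there. As $s\to0^+$ it tends to $1$, and the argument tends to $0$. Extending $R$ by $R(0)=\sum_j\arctan d_j$ therefore makes $R$ continuous on $[0,b]$. The function $L$ is continuous in $s$ by continuity of eigenvalues, and $L(0)=R(0)$. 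By Step 1, $L-R$ is continuous and takes values in $2\pi\mathbb{Z}$, so it is constant, and since it vanishes at $s=0$ it vanishes identically. Evaluating at $s=b$ gives the identity.

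As an independent sanity check on the range: by interlacing with a rank-one positive perturbation, $\phase_m(A)-\sum_j\arctan d_j\in[0,\pi)$, which is consistent with the value of the argument lying in $(0,\pi)$.
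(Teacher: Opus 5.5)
Your proof is correct and follows the paper's argument essentially verbatim. You use the matrix determinant lemma to get $\det\bigl(I+i(D+b\,\sigma\otimes\sigma)\bigr)=\prod_j(1+id_j)\,(1+bQ+ibP)$, and you remove the $2\pi$ ambiguity by continuity along the path $D+s\,\sigma\otimes\sigma$, $0\le s\le b$, starting from $\sum_j\arctan d_j$; the paper additionally notes the phase is strictly increasing along this path, which is not needed.

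Incidentally, your interlacing remark already fixes the branch without the continuity step. It places $\phase_m(D+b\,\sigma\otimes\sigma)-\sum_j\arctan d_j$ in $[0,\pi)$, and $\Arg(1+bQ+ibP)$ lies in $(0,\pi)$. Two numbers in these ranges that are congruent mod $2\pi$ differ by less than $\pi$, so they must coincide.
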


\begin{proof}
The matrix determinant lemma gives
\begin{align*}
 \det\big(I+i(D+b\,\sigma\otimes\sigma)\big)
 &=\det(I+iD)\left(1+i b\,\sigma^T(I+iD)^{-1}\sigma\right)\\
 &=\prod_{j=1}^m(1+i d_j)\,(1+bQ+i bP).
\end{align*}
The second factor lies in the open upper half-plane.  To fix the branch of the
argument, consider $D+t b\,\sigma\otimes\sigma$, $0\leq t\leq1$.  Its phase is
continuous, starts at $\sum_j\arctan d_j$, and its derivative is
\[
 b\,\tr\!\left((I+(D+t b\,\sigma\otimes\sigma)^2)^{-1}
                  \sigma\otimes\sigma\right)>0.
\]
The argument of $1+t bQ+i t bP$ is the continuous branch beginning at zero,
so no multiple of $2\pi$ occurs in \eqref{eq:rank-one-phase}.
\end{proof}

We will also use the standard rank-one blow-up limit.

\begin{lemma}\label{lem:rank-one-limit}
Let $B\in\Sym^2(\R^m)$ and let $\nu$ be a unit vector.  Then
\begin{equation}\label{eq:rank-one-limit}
 \lim_{t\to+\infty}\phase_m(B+t\,\nu\otimes\nu)
 =\frac{\pi}{2}+\phase_{m-1}(B|_{\nu^\perp}),
\end{equation}
and
\begin{equation}\label{eq:rank-one-limit-negative}
 \lim_{t\to+\infty}\phase_m(B-t\,\nu\otimes\nu)
 =-\frac{\pi}{2}+\phase_{m-1}(B|_{\nu^\perp}).
\end{equation}
\end{lemma}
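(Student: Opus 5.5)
We will prove Lemma~\ref{lem:rank-one-limit} by reducing to the case where $\nu$ is a coordinate vector and then applying the rank-one phase formula of Lemma~\ref{lem:rank-one-formula}.

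\textbf{Step 1: normalization.} Since $\phase_m$ is invariant under orthogonal conjugation, we may rotate so that $\nu=e_m$. We then diagonalize $B|_{\nu^\perp}$ by a rotation fixing $e_m$. After this, $B$ has the block form
\[
B=\begin{pmatrix} D' & w\\ w^T & \beta\end{pmatrix},
\qquad D'=\operatorname{diag}(d_1,\dots,d_{m-1}),
\]
and $B+t\,\nu\otimes\nu$ is the same matrix with $\beta$ replaced by $\beta+t$.

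\textbf{Step 2: an interlacing squeeze.} By Cauchy interlacing with $T=\nu^\perp$, let $\mu_1\ge\dots\ge\mu_{m-1}$ be the eigenvalues of $D'$ and $\lambda_1(t)\ge\dots\ge\lambda_m(t)$ those of $B+t\nu\otimes\nu$. Then
\[
\lambda_{j}(t)\ge\mu_j\ge\lambda_{j+1}(t).
\]
The largest eigenvalue satisfies $\lambda_1(t)\ge \langle (B+t\nu\otimes\nu)\nu,\nu\rangle=\beta+t\to\infty$, so $\arctan\lambda_1(t)\to\frac{\pi}{2}$.

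For the remaining eigenvalues, the map $t\mapsto\lambda_{j+1}(t)$ is nondecreasing, since adding a positive semidefinite matrix raises every eigenvalue, and it is bounded above by $\mu_j$. It therefore has a limit $\ell_j\le\mu_j$.

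\textbf{Step 3: the key point, $\ell_j=\mu_j$.} This is the only nontrivial step. The argument uses Schur complements.
\begin{itemize}
\item For large $t$ and any $s$ with $|s|\le M$, we have $\det(B+t\nu\otimes\nu-sI)=(\beta+t-s)\det(D'-sI)-w^T\operatorname{adj}(D'-sI)w$.
\item Dividing by $t$ shows that the normalized characteristic polynomial restricted to $[-M,M]$ converges uniformly to $\det(D'-sI)$.
\item By Hurwitz/continuity of roots, the $m-1$ bounded eigenvalues converge to the $\mu_j$.
\end{itemize}

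Hence $\sum_{j\ge2}\arctan\lambda_j(t)\to\sum_j\arctan\mu_j=\phase_{m-1}(B|_{\nu^\perp})$, which proves \eqref{eq:rank-one-limit}.

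\textbf{Alternative route via Lemma~\ref{lem:rank-one-formula}.} When $B$ is diagonal, the limit also follows directly from \eqref{eq:rank-one-phase}. With $\sigma=e_m$, the quantities $P$ and $Q$ are fixed, and $\Arg(1+bQ+ibP)\to\Arg(Q+iP)$. This equals $\frac{\pi}{2}-\arctan d_m$ because $Q+iP=\frac{i}{1-i d_m}\cdot$ (positive). Adding $\sum_j\arctan d_j$ gives $\frac{\pi}{2}+\sum_{j<m}\arctan d_j$, as claimed. In the non-diagonal case, however, the Schur-complement argument above is the cleaner route.

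\textbf{Step 4: the negative direction.} Equation \eqref{eq:rank-one-limit-negative} follows by applying \eqref{eq:rank-one-limit} to $-B$. This works because $\phase_m(-A)=-\phase_m(A)$ and $(-B)|_{\nu^\perp}=-B|_{\nu^\perp}$.

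I expect Step 3, the convergence of the bounded eigenvalues to those of the compression, to be the main obstacle. It is nonetheless standard.
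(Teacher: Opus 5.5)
Your main argument is correct and is essentially the paper's proof: rotate $\nu$ to a coordinate vector, note that one eigenvalue is at least $\beta+t\to\infty$, divide the characteristic polynomial by $t$ and pass to the limit on bounded sets. You additionally make explicit the boundedness of the other $m-1$ eigenvalues (via interlacing and monotonicity) and the reduction of the negative case to $-B$, both of which the paper leaves implicit.

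One small slip in your optional aside: in fact $Q+iP=\frac{d_m+i}{1+d_m^2}=\frac{i}{1+id_m}$. A positive multiple of $\frac{i}{1-id_m}$ would instead give argument $\frac{\pi}{2}+\arctan d_m$. Your stated value $\frac{\pi}{2}-\arctan d_m$ is nonetheless right. The diagonal case is also immediate without Lemma~\ref{lem:rank-one-formula}, whose stated hypothesis $\sigma\in\{\pm1\}^m$ does not cover $\sigma=e_m$ anyway.
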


\begin{proof}
After a rotation, take $\nu=e_1$.  One eigenvalue of
$B+t e_1\otimes e_1$ tends to $+\infty$, while its remaining $m-1$
eigenvalues converge to the eigenvalues of the compression $B|_{e_1^\perp}$.
For example, the latter assertion follows by dividing the characteristic
polynomial by $t$ and passing to the limit on bounded sets.  Applying
$\arctan$ gives \eqref{eq:rank-one-limit}; the negative limit is identical.
\end{proof}

\section{The two-dimensional construction}\label{sec:two-dim}
In this section, we present the two-dimensional construction. Let $\rho (x) = |x_1| + |x_2|$. Fix $1 < \beta < \alpha < 2 $ and $0<R < 1 $,  and let 
$$ v(x) = |x_1|^\alpha - | x_2|^\alpha + | \rho ( x) - R | ^\alpha - |x_2^2-R^2|^\beta -| x_2^2 - R^2 |. $$ 
Note $v$ is $C^2$ away from the set $$S=
\{x_1=0\}
\cup
\{x_2=0\}
\cup
\{\rho(x)=R\}
\cup
\{x_2=\pm R\}.$$
On $\overline{B_1} \setminus S$, define 
$$ \psi(x) = \sum_{i = 1}^2 \arctan (\lambda_i( D^2 v (x))) .$$

\begin{proposition}
\label{lem:2d-phase} The function $\psi : \overline{B_1} \setminus S \to \mathbb{R} $ extends to a continuous function on $\overline B_1$ and takes values in $( - \pi , \pi ). $
\end{proposition}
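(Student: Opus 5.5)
We prove continuity stratum by stratum, computing $D^2v$ explicitly off $S$. On each connected component of $\overline{B_1}\setminus S$ every summand of $v$ is smooth: $|x_1|^\alpha$ and $-|x_2|^\alpha$ contribute $\operatorname{diag}(\alpha(\alpha-1)|x_1|^{\alpha-2},0)$ and $\operatorname{diag}(0,-\alpha(\alpha-1)|x_2|^{\alpha-2})$. The term $|\rho-R|^\alpha$ has $\rho$ piecewise linear, so it contributes $\alpha(\alpha-1)|\rho-R|^{\alpha-2}\,\sigma\otimes\sigma$ with $\sigma=(\sgn x_1,\sgn x_2)$. The last two terms depend only on $x_2$ and contribute a multiple of $e_2\otimes e_2$. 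The $-|x_2^2-R^2|$ term contributes $\mp 2$, and $-|x_2^2-R^2|^\beta$ contributes a coefficient behaving like $-c|x_2\mp R|^{\beta-2}$ near $x_2=\pm R$.

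Since $n=2$, $\psi=\arg\det(I+iD^2v)$ with the branch in $(-\pi,\pi)$. We therefore track the limiting eigenvalue configuration as $x$ approaches each point of $S$. Off the singular strata all entries are finite, and $\psi$ is real-analytic there.

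\textbf{Codimension-one strata.} At a generic point of $\{x_1=0\}$, exactly one rank-one term $t\,e_1\otimes e_1$ blows up with $t\to+\infty$. By Lemma~\ref{lem:rank-one-limit}, $\psi\to\frac{\pi}{2}+\arctan(B_{22})$, where $B$ is the bounded remainder; $B_{22}$ is continuous across $x_1=0$. The same argument applies on $\{x_2=0\}$ (coefficient $\to-\infty$), on $\{\rho=R\}$ (direction $\sigma$, with $\sigma^\perp$ compression continuous along each edge of the diamond), and on $\{x_2=\pm R\}$ (coefficient $\to-\infty$ from the $\beta$-term). In each case the limit is of the form $\pm\frac\pi2+\arctan(\text{finite})\in(-\pi,\pi)$. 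It agrees from both sides because the remainder matrix and the compression are continuous across the stratum; the direction $\sigma$ is constant along each open edge.

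\textbf{Junction points.} These are where two or more strata meet:
\begin{itemize}
\item the origin;
\item the diamond vertices $(\pm R,0)$ and $(0,\pm R)$;
\item the points where $\{x_1=0\}$ meets $\{x_2=\pm R\}$, which coincide with diamond vertices;
\item the points where $\{\rho=R\}$ meets $\{x_2=\pm R\}$, again only the vertices $(0,\pm R)$.
\end{itemize}
Here two divergent rank-one terms compete. The required ordering is that one eigenvalue goes to $+\infty$ and the other to $-\infty$, so the phase tends to $0$ (or to a well-defined value in $(-\pi,\pi)$) rather than to $\pm\pi$.

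\textbf{Origin.} We have $D^2v\approx\operatorname{diag}(a|x_1|^{\alpha-2},-a|x_2|^{\alpha-2})+\text{bounded}$, with one eigenvalue positive-infinite and one negative-infinite, so $\psi\to0$. Along the axes the two diverge at different rates, but the limit is still $\frac\pi2-\frac\pi2=0$. When only one entry diverges, Lemma~\ref{lem:rank-one-limit} gives $\pm\frac\pi2+\arctan(\pm\infty)=0$ as well. A uniform argument uses the formula of Lemma~\ref{lem:rank-one-formula} with $D$ diagonal.

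\textbf{Vertices $(\pm R,0)$.} Here $+t_1e_1\otimes e_1$ with $t_1\sim|x_1|^{\alpha-2}$ is absent, since $x_1\ne0$. The divergent terms are $-a|x_2|^{\alpha-2}e_2\otimes e_2$ and $+a|\rho-R|^{\alpha-2}\sigma\otimes\sigma$, which are not parallel. The sum of a large negative multiple of one rank-one projection and a large positive multiple of a transverse one has one eigenvalue $\to+\infty$ and one $\to-\infty$. This follows because the determinant is $\approx -t_1t_2(\sin\angle)^2\to-\infty$ while the trace signs are controlled. Hence $\psi\to0$.

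\textbf{Vertices $(0,\pm R)$, the main obstacle.} Three terms diverge: $+|x_1|^{\alpha-2}e_1\otimes e_1$, $+|\rho-R|^{\alpha-2}\sigma\otimes\sigma$, and $-c|x_2\mp R|^{\beta-2}e_2\otimes e_2$. The two positive terms alone could push both eigenvalues to $+\infty$, giving phase near $\pi$ and a discontinuity. The negative $\beta$-term is designed to dominate in the $e_2$ direction.

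We handle this with the determinant: $\det(D^2v)\to-\infty$ forces opposite-sign divergent eigenvalues, giving the limit $0$. With $s=|x_1|$ and $r=|x_2\mp R|$, we have $|\rho-R|\le s+r$. Expanding in the basis $e_1,e_2$, the determinant is dominated by
\[
-c\,r^{\beta-2}\bigl(s^{\alpha-2}+|\rho-R|^{\alpha-2}\bigr)
\]
plus the product $s^{\alpha-2}|\rho-R|^{\alpha-2}$ from the two positive terms, which is nonnegative since they are transverse. We must show the negative part wins, or else handle the regime where it does not.

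When $|\rho-R|$ is small compared with $r$, the positive product can be large, and this is exactly where $\beta<\alpha$ must be used. First compare exponents along all approach regimes $s,r,|\rho-R|\to0$. If the determinant sign cannot be forced, compute $\psi$ directly via Lemma~\ref{lem:rank-one-formula}, written as $D+b\sigma\otimes\sigma$ with $D$ diagonal, and show $\Arg(1+bQ+ibP)+\arctan d_1+\arctan d_2$ converges to a single value in every regime. This case analysis is where I expect the real work to lie.

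Finally, on $\partial B_1$ there are no new singularities except strata reaching the boundary, which are covered above. The values lie in $(-\pi,\pi)$ off $S$ automatically, and every limit computed is of the form $\pm\frac\pi2+\arctan(\text{finite})$ or $0$, hence strictly inside. Compactness of $\overline{B_1}$ is not needed beyond this.
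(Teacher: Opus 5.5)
Your overall strategy (rank-one blow-up limits on the regular strata, then separate treatment of the junctions) is the paper's, and you handle the regular strata and the origin adequately. But there is a genuine gap at the hardest junction, the vertices $(0,\pm R)$. There you set up $\det D^2v\approx AB-N(A+B)$, with $A=c_\alpha s^{\alpha-2}$, $B=c_\alpha w^{\alpha-2}$, $w=|\rho-R|$, and $N\ge c\,r^{\beta-2}$ the size of the $\beta$-term. You then defer the decisive comparison to a case analysis you never carry out.

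The missing observation is that no case analysis is needed:
\begin{itemize}
\item Since $\rho-R=|x_1|+(|x_2|-R)$, the triangle inequality gives $r\le s+w$. Hence $\max(s,w)\ge r/2$ and $\min(A,B)\le c_\alpha(r/2)^{\alpha-2}$.
\item Using $AB\le\min(A,B)(A+B)$ and $r^{\beta-2}/r^{\alpha-2}\to\infty$ (this is where $\beta<\alpha$ enters), you get $AB-N(A+B)\le-\frac12N(A+B)$ near the vertex along every approach.
\item In particular, the regime $w\ll r$ that worried you forces $s\approx r$, so $A\approx c_\alpha r^{\alpha-2}\ll N$.
\item The bounded remainder changes the determinant only by $O(A+B+N+1)=o(N(A+B))$.
\end{itemize}
The paper reaches the same conclusion differently. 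It uses $\lambda_1\ge v_{11}\to+\infty$, and bounds $\lambda_2$ above by $v_{22}$ in the cones $t\ge-\eta x_1$ and $t\le-x_1/\eta$, and by $\langle e,D^2v\,e\rangle$ with $e=\frac1{\sqrt2}(1,-1)$ in the remaining cone.

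Second, your inference ``$\det D^2v\to-\infty$ forces opposite-sign divergent eigenvalues'' is false as stated. This affects both $(0,\pm R)$ and $(\pm R,0)$, where ``the trace signs are controlled'' does not repair it. If $\lambda_1\to+\infty$ while $\lambda_2$ stays bounded, then $\det\to-\infty$ as well. In that case $\psi$ would approach $\frac\pi2+\arctan\lambda_2$, and nothing guarantees a single limit.

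What you need is $\min_i|\lambda_i|=|\det D^2v|/\|D^2v\|\to\infty$. Since $\|D^2v\|\le C(A+B+N+1)$, at $(0,\pm R)$ this gives $\min_i|\lambda_i|\ge c\,N(A+B)/(A+B+N+1)\to\infty$. At $(\pm R,0)$ the analogous bound is $c\,t_1t_2/(t_1+t_2+1)\to\infty$, with $t_1=c_\alpha|\rho-R|^{\alpha-2}$ and $t_2=c_\alpha|x_2|^{\alpha-2}$. The paper instead uses $\lambda_1\ge v_{11}$ together with a test direction $\frac1{\sqrt2}(1,\pm1)$ along which the $|\rho-R|^\alpha$ singularity cancels.

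With these two additions your determinant route closes, and it is arguably shorter than the paper's cone decomposition. Without them, the proposal stops exactly at the step that makes the proposition true.
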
 

\begin{proof}
Recall that for a $2\times2$ symmetric matrix $A$, with eigenvalues
$\lambda_1(A)\geq\lambda_2(A)$,
\[
 \lambda_1(A)=\sup_{e\in S^1}\langle e,Ae\rangle,
 \qquad
 \lambda_2(A)=\inf_{e\in S^1}\langle e,Ae\rangle.
\]
Away from $S$, we have
\begin{align}
    v_1 & = \alpha \sgn x_1 ( |x_1|^{\alpha-1} + | \rho (x) - R | ^{\alpha -1 } \sgn ( \rho (x) - R ) ),\notag \\ 
    v_2 & = \alpha \sgn x_2 ( -|x_2|^{\alpha-1} + | \rho (x) - R | ^{\alpha -1 } \sgn ( \rho (x) - R ) ) \notag
    \\ & \quad - 2 x_2 \sgn ( x_2^2 - R^2 ) ( \beta | x_2^2 - R^2 |^{\beta -1} + 1 )   ,\notag \\
    v_{11} & = \alpha( \alpha - 1) (| x_1 | ^{\alpha-2} + | \rho(x) - R|^{\alpha -2} ), \label{eq:v11}\\
    v_{12} & = \alpha ( \alpha -1 ) | \rho ( x) - R| ^{\alpha -2 } \sgn x_1 \sgn x_2 , \label{eq:v12}\\
    v_{22} & = \alpha ( \alpha -1 ) ( - | x_2 |^{\alpha -2 } + | \rho ( x) - R | ^{\alpha -2} ) - 4 \beta (\beta -1 ) | x_2^2 - R^2 |^{\beta -2} x_2^2 \notag \\ & \quad - 2 \sgn ( x^2_2 - R^2 ) ( \beta | x_2^2 - R^2 |^{\beta -1 } +1). \label{eq:v22}
\end{align}
We say a point $x_0$ of $S$ is regular if $S$ is a smooth manifold in a neighborhood of $x_0$.
We examine the regular pieces of $S$ first.  At a regular point $(0,y)$ on the piece
$\{x_1=0\}$, the only unbounded term in the $e_1$ direction is
$\alpha(\alpha-1)|x_1|^{\alpha-2}$.  The elementary eigenvalue formula for a
$2\times2$ symmetric matrix therefore gives
\[
 \lambda_1(D^2v)\longrightarrow+\infty,
 \qquad
 \lambda_2(D^2v)\longrightarrow v_{22}(0,y).
\]
At a regular point $(y,0)$ on $\{x_2=0\}$, the analogous negative blow-up gives
\[
 \lambda_2(D^2v)\longrightarrow-\infty,
 \qquad
 \lambda_1(D^2v)\longrightarrow v_{11}(y,0).
\]
At a regular point on $\{\rho=R\}$, the singular part of the Hessian is a
positive rank-one term in the normal direction to $S$. Lemma~\ref{lem:rank-one-limit}
then gives
\[
 \lambda_1(D^2v)\longrightarrow+\infty,
 \qquad
 \lambda_2(D^2v)\longrightarrow v_{\tau\tau},
\]
where $\tau$ is a unit tangent vector to $\{\rho=R\}$.  Finally, at a regular
point of $\{x_2=\pm R\}$, the term
\[
 -4\beta(\beta-1)|x_2^2-R^2|^{\beta-2}x_2^2
\]
in \eqref{eq:v22} tends to $-\infty$, while the other entries relevant to the
tangential compression have finite limits.  Hence
\[
 \lambda_2(D^2v)\longrightarrow-\infty,
 \qquad
 \lambda_1(D^2v)\longrightarrow v_{11}.
\]
Consequently, if $T$ is the tangent line to a regular piece of $S$,
then the phase trace is
\begin{equation*}\label{eq:2d-positive-trace}
 \psi=\frac{\pi}{2}+\phase_1(D_T^2v)
 \quad\text{on the regular pieces of }\{x_1=0\}\cup\{\rho=R\},
\end{equation*}
and
\begin{equation*}\label{eq:2d-negative-trace}
 \psi=-\frac{\pi}{2}+\phase_1(D_T^2v)
 \quad\text{on the regular pieces of }\{x_2=0\}\cup\{x_2=\pm R\}.
\end{equation*}
These limits are independent of the side from which the singular set is approached.

It remains to consider the intersections of the singular lines.  At
the origin, \eqref{eq:v11} and \eqref{eq:v22} imply
\[
 \lambda_1(D^2v)\longrightarrow+\infty,
 \qquad
 \lambda_2(D^2v)\longrightarrow-\infty,
\]
so $\psi\to0$.

At $(R,0)$, we again have $\lambda_1(D^2v)\to+\infty$.  If $x_2>0$, take
$e=\frac {1}{\sqrt 2}(1,-1)$; the positive singular term generated by
$|\rho-R|^\alpha$ cancels in this direction, and
\[
 \langle e,D^2v\,e\rangle
 =-\frac{\alpha(\alpha-1)}{2}|x_2|^{\alpha-2}+O(1)
 \longrightarrow-\infty.
\]
If $x_2<0$, the same conclusion follows with $e=\frac{1}{\sqrt2}(1,1)$. Finally, $\lambda_2 (D^2v) = -\infty$ along $\{x_2 =0\}$. Thus
$\lambda_2(D^2v)\to-\infty$ and $\psi\to0$ at $(R,0)$.  Reflection gives the
same limits at $(-R,0)$.

We finally treat $(0,R)$; the other remaining junction follows by symmetry.
Write $x_2=R+t$, and put
$c_\alpha=\alpha(\alpha-1)$ and $c_\beta=\beta(\beta-1)$.  For $x_1>0$,
\[
D^2v=
\begin{pmatrix}
 c_\alpha\big(x_1^{\alpha-2}+|x_1+t|^{\alpha-2}\big)
 &c_\alpha|x_1+t|^{\alpha-2}\\
 c_\alpha|x_1+t|^{\alpha-2}
 &c_\alpha|x_1+t|^{\alpha-2}
  -4c_\beta|t(t+2R)|^{\beta-2}(R+t)^2+O(1)
\end{pmatrix}.
\]
In particular, $\lambda_1(D^2v)\geq v_{11}\to+\infty$.  To control the lower
eigenvalue, fix $\eta\in(0,1)$.  In the region $t\geq-\eta x_1$, one has
$|x_1+t|\geq c_\eta|t|$ for a constant $c_\eta>0$.  Hence
\[
 v_{22}\leq C_\eta|t|^{\alpha-2}-c|t|^{\beta-2}+O(1)
 \longrightarrow-\infty,
\]
because $\beta<\alpha$.  The same estimate holds in the region
$t\leq-x_1/\eta$, since there $|x_1+t|\geq(1-\eta)|t|$.  In the remaining
region $-x_1/\eta<t<-\eta x_1$, we have $x_1\geq\eta|t|$.  Taking
$e=\frac{1}{\sqrt 2} (1,-1)$ yields
\begin{align*}
 \langle e,D^2v\,e\rangle
 &=\frac{c_\alpha}{2}x_1^{\alpha-2}
   -2c_\beta|t(t+2R)|^{\beta-2}(R+t)^2+O(1)\\
 &\leq \frac{c_\alpha}{2}(\eta|t|)^{\alpha-2}
   -2c_\beta|t(t+2R)|^{\beta-2}(R+t)^2+O(1)
 \longrightarrow-\infty.
\end{align*}
By evenness in $x_1$, $\lambda_2(D^2v)\to-\infty$ along every path to $(0,R)$, and so
$\psi\to0$.  Evenness in $x_2$ gives the same conclusion at
$(0,-R)$.

We have therefore obtained a unique phase limit at every point of $S$.  The
limits in \eqref{eq:2d-positive-trace} and \eqref{eq:2d-negative-trace} lie
strictly between $-\pi$ and $\pi$, while the limits at all singular
intersections are zero.  Hence $\psi$ extends continuously to $\overline B_1$
with values in $(-\pi,\pi)$.
\end{proof}

From now on, we shall refer to the continuous extension of $\psi$ to $\overline B_1$ also as $\psi$. Define $g: \partial B_1 \to \mathbb{R}$ by 
$$ g(x) = v(x) + | x_1| - 2 |x_2| .$$

\begin{theorem}\label{thm:2d} Fix
$$
1<\beta<\alpha<2,
\quad
R\in
\left(
0,
\left(\frac1\alpha\right)^{1/(\alpha-1)}
\right).$$
Then, for any 
$$
C\in
[
0,
\min (
4R,
1-\alpha R^{\alpha-1}
)
),
$$

$$
u(x)
=v(x)
+|x_1|
-2|x_2|
+C(R-\rho(x))^+ 
$$
is a viscosity solution of the Dirichlet problem
$$ \arctan \lambda_1(D^2 w)+ \arctan \lambda_2 (D^2 w ) = \psi (x) \text{ in } B_1, \quad w = g \text{ on } \partial B_1.$$ 
\end{theorem}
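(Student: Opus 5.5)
The plan is to verify the viscosity inequalities pointwise, separating the open set $B_1\setminus S$ from the singular set $S$.

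\emph{Boundary values and the smooth part.} Continuity of $u$ on $\overline{B_1}$ is immediate. On $\partial B_1$ we have $\rho\geq|x|=1>R$, so the tent $C(R-\rho)^+$ vanishes there and $u=g$ for every $C$. Off $S$, the functions $|x_1|$, $-2|x_2|$ and $C(R-\rho)^+$ are affine, hence $D^2u=D^2v$. The equation therefore holds classically with phase $\psi$, and by definition of $\psi$ this implies both viscosity inequalities.

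\emph{Points of $S$.} At a point $x_0\in S$ we sort by the sign of the cusp. Where $u$ has a strict concave corner, i.e.\ one-sided derivatives that jump downward across a direction $\nu$ (or a cusp of type $-|s|^\gamma$ with $\gamma<2$), no $C^2$ function touches from below, so the supersolution condition is vacuous. Symmetrically, at a convex corner or cusp nothing touches from above. For the remaining inequality we compare with the limit phase. Suppose $\varphi$ touches from above at a regular point where $u$ is locally convex in the normal direction. Then $\varphi-u$ has a local minimum along the tangent line $T$, which is smooth, so $D_T^2\varphi\geq D_T^2u=D_T^2v$. Lemma~\ref{lem:compression} with $k=1$ then gives
\[
\phase_2(D^2\varphi)\geq -\tfrac{\pi}{2}+\phase_1(D^2_T\varphi).
\]
This is the wrong sign. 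So the right pairing is this: on the pieces where $\psi=\frac{\pi}{2}+\phase_1(D^2_Tv)$ (namely $\{x_1=0\}\cup\{\rho=R\}$), we need $u$ to be a convex cusp, which rules out touching from above. For touching from below we use
\[
\phase_2(D^2\varphi)\leq \tfrac{\pi}{2}+\phase_1(D^2_T\varphi)\leq\tfrac{\pi}{2}+\phase_1(D_T^2 v)=\psi .
\]
Dually, on $\{x_2=0\}\cup\{x_2=\pm R\}$ the function $u$ must be a concave cusp. Touching from below is then impossible, and touching from above gives $\phase_2\geq-\frac{\pi}{2}+\phase_1(D_T^2v)=\psi$.

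\emph{Checking the cusp signs.} The corners come from the $C^{1,\alpha-1}$ cusp terms together with the affine kinks, and the real work is to confirm that each regular piece of $S$ has the required sign.
\begin{itemize}
\item On $\{x_1=0\}$: $|x_1|^\alpha+|x_1|$ is a convex kink.
\item On $\{x_2=0\}$: $-|x_2|^\alpha-2|x_2|$ is concave.
\item On $\{x_2=\pm R\}$: $-|x_2^2-R^2|^\beta-|x_2^2-R^2|$ is concave.
\item On $\{\rho=R\}$: $|\rho-R|^\alpha$ is a convex cusp, but the tent $C(R-\rho)^+$ contributes a concave corner of slope jump $C$ across the normal.
\end{itemize}
The last case is the delicate one, and the constraint $C<1-\alpha R^{\alpha-1}$ should be exactly what is needed there. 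Note that $|\rho-R|^\alpha$ is $C^1$ and contributes no jump, so the normal slope jump is produced by the affine terms $|x_1|-2|x_2|$ only at the junctions with the axes, not on the regular part of $\{\rho=R\}$. Hence on regular points of $\{\rho=R\}$ there is a genuine concave corner. We must then check instead that the convex cusp $|s|^\alpha$ still prevents touching from above. This is fine, since a corner plus an $|s|^\alpha$ cusp with $\alpha<2$ still has $\liminf$ of second difference quotients equal to $+\infty$ in a suitable sense. I would verify it directly: a $C^2$ function $\varphi\geq u$ touching at $x_0$ would need $\varphi(x_0+s\nu)-\varphi(x_0)\geq -Cs^+ + c|s|^\alpha$ for small $s$, which forces the derivative of $\varphi$ in $\nu$ to lie in $[-C,0]$. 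I expect this subcase needs more care than stated, and the touching from below must also be excluded or handled by the compression argument.

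\emph{Junctions.} At the junction points the phase limit is $0$ (Proposition~\ref{lem:2d-phase}). There both types of corner or cusp should occur in transverse directions, and I plan to show that each side test is impossible or that the corresponding inequality follows. The main obstacle is the points $(\pm R,0)$, $(0,\pm R)$ and the origin, where the slopes of $|x_1|-2|x_2|$ and of the tent interact. There I would use the bounds $C<4R$ and $R<(1/\alpha)^{1/(\alpha-1)}$ (the latter making $1-\alpha R^{\alpha-1}>0$) to show that along some line through the point $u$ has a strict convex corner, forbidding touching from above, and along another a strict concave corner, forbidding touching from below. This would make both viscosity conditions vacuous there.
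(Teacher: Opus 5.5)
Your overall architecture matches the paper's. The equation holds classically off $S$. On $S_+=\{x_1=0\}\cup\{\rho=R\}$ and $S_-=\{x_2=0\}\cup\{x_2=\pm R\}$, a one-dimensional corner excludes one kind of test and Lemma~\ref{lem:compression} gives the other inequality. At the junctions both conditions are vacuous. The gap is in the corner bookkeeping, which is where the hypotheses on $C$ actually enter, and there your proposal has a sign error.

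\textbf{The tent corner on $\{\rho=R\}$ is convex.} At a regular point of $\{\rho=R\}$, $\rho$ is affine in the open quadrant, so $C(R-\rho)^+$ is a maximum of two affine functions, i.e.\ a \emph{convex} corner, not a concave one. Varying $x_1=x_{0,1}+t$ gives, modulo smooth terms, $h(t)=|t|^\alpha+C(-t\,\sgn x_{0,1})^+$. An upper test $pt+O(t^2)$ would then need $p\ge 0$ from one side and $p\le -C$ from the other. For $C=0$, the cusp $|t|^\alpha$ with $\alpha<2$ excludes it. With your concave sign the step genuinely fails, as your own computation $p\in[-C,0]$ signals. A concave corner plus $|s|^\alpha$ admits upper tests whose slope lies strictly between the one-sided slopes and whose normal second derivative is arbitrarily negative. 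For such tests the required inequality $\phase_2(D^2\varphi)\ge\frac{\pi}{2}+\phase_1(D_T^2v)$ is false. Your remark that second difference quotients still tend to $+\infty$ is wrong in that case, because the linear term dominates $|s|^\alpha$. The same convex tent corner is what the paper uses to exclude upper tests at $(\pm R,0)$.

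\textbf{The axis kinks of $\rho$ are missing.} Your bullets for $\{x_1=0\}$ and $\{x_2=0\}$ list only $|x_1|^\alpha+|x_1|$ and $-|x_2|^\alpha-2|x_2|$. They ignore that $\rho$ is itself kinked on the axes, so $|\rho-R|^\alpha$ and the tent also have corners there.
\begin{itemize}
\item On $\{x_1=0\}$ with $|x_2|<R$, these terms contribute concave kinks $-\alpha(R-|x_2|)^{\alpha-1}|x_1|$ and $-C|x_1|$, competing with $+|x_1|$. The paper computes $\lim_{x_1\downarrow 0}u_1\ge 1-\alpha R^{\alpha-1}-C>0$. \emph{This} is where $C<1-\alpha R^{\alpha-1}$ is needed, not on $\{\rho=R\}$. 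Without it the corner at points of $\{x_1=0\}$ near the origin becomes concave, and upper tests exist that violate the subsolution inequality.
\item On $\{x_2=0\}$ with $|x_1|>R$, there is a convex contribution $\alpha(|x_1|-R)^{\alpha-1}|x_2|$. You must check it is dominated by $-2|x_2|$; it is, since $\alpha<2$.
\end{itemize}

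\textbf{The junctions are only sketched.} Your plan is the right one but is not carried out. In the paper, $C<4R$ is used at $(0,\pm R)$. Along $x_1=0$, the concave corner of $-|x_2^2-R^2|$ (slope jump $-4R$) beats the convex tent corner (jump $C$), which excludes lower tests. The $\{x_1=0\}$ corner excludes upper tests there. At the origin, the $x_1$- and $x_2$-corners do the job.
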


\begin{proof}
Let $S$ be the singular set of $v$ as defined in the previous lemma.  Away
from $S$,
\[
 D^2u=D^2v,
\]
since $u-v$ is affine on each connected component of $B_1\setminus S$.
Therefore $u$ is a classical solution of the equation on $B_1\setminus S$.
It remains to check the viscosity inequalities on $S$.

We first determine which test functions can occur.  Away from $S$, we have
\[
 u_1(x_1,x_2)=\sgn x_1\big(\alpha|x_1|^{\alpha-1}+1
 +\alpha|\rho(x)-R|^{\alpha-1}\sgn(\rho(x)-R)
 -C\1_{\{R>\rho\}}\big).
\]
Thus
\begin{align*}
 \lim_{x_1\downarrow0}u_1(x_1,x_2)
 &=1+\alpha\big||x_2|-R\big|^{\alpha-1}\sgn(|x_2|-R)
   -C\1_{\{R>|x_2|\}})\\
 &\geq1-\alpha R^{\alpha-1}-C>0,
\end{align*}
and
\begin{align*}
 \lim_{x_1\uparrow0}u_1(x_1,x_2)
 &=-\big(1+\alpha\big||x_2|-R\big|^{\alpha-1}\sgn(|x_2|-R)
   -C\1_{\{R>|x_2|\}}\big)\\
 &\leq-1+\alpha R^{\alpha-1}+C<0.
\end{align*}
Hence no $C^2$ function can touch $u$ from above at a point of
$\{x_1=0\}$.

Similarly,
\begin{align*}
 u_2(x_1,x_2)
 &=\sgn x_2\big(-\alpha|x_2|^{\alpha-1}-2
 +\alpha|\rho(x)-R|^{\alpha-1}\sgn(\rho(x)-R)
 -C\1_{\{R>\rho\}}\big)\\
 &\quad-2x_2\sgn(x_2^2-R^2)
       \big(1+\beta|x_2^2-R^2|^{\beta-1}\big).
\end{align*}
Consequently,
\begin{align*}
 \lim_{x_2\downarrow0}u_2(x_1,x_2)
 &=-2+\alpha\big||x_1|-R\big|^{\alpha-1}\sgn(|x_1|-R)
   -C\1_{\{R>|x_1|\}}\\
 &\leq-2+\alpha<0,
\end{align*}
whereas
\begin{align*}
 \lim_{x_2\uparrow0}u_2(x_1,x_2)
 &=2-\alpha\big||x_1|-R\big|^{\alpha-1}\sgn(|x_1|-R)
   +C\1_{\{R>|x_1|\}}\\
 &\geq2-\alpha>0.
\end{align*}
Thus no $C^2$ function can touch $u$ from below at a point of
$\{x_2=0\}$.

Similarly, at a regular point of $\{x_2=\pm R\}$, the term
$-|x_2^2-R^2|$ creates a strict downward corner in the $x_2$ direction;
all other terms have matching one-sided first derivatives there. Hence no
$C^2$ function can touch $u$ from below at such a point.  At $(0,R)$,
\[
 \lim_{x_2\downarrow R}u_2(0,x_2)
 =-\alpha R^{\alpha-1}-2-2R,
\]
while
\[
 \lim_{x_2\uparrow R}u_2(0,x_2)
 =-\alpha R^{\alpha-1}-2-C+2R.
\]
Their difference is $-4R+C<0$, so no lower test exists at $(0,R)$; by
symmetry, the same holds at $(0,-R)$.  Since these two points also lie on
$\{x_1=0\}$, neither kind of test occurs there.

At a regular point $(x_1, x_2)$ of $\{\rho=R\}$, vary $x_1$ transversely to
the interface while remaining in one orthant. Consider $h(t)= u ( x_1+t , x_2).$ Modulo a smooth function, the one-dimensional restriction has the form
\[
 h(t)=|t|^\alpha+C(- t \sgn x_1)^+
\]
near $t=0$.
This function has no upper $C^2$ test at zero. If $C>0$, the positive corner of $(-t \sgn x_1))^+$ rules out contact, and if $C=0$, this follows from the fact that $\alpha <2$. This also proves the absence of
an upper test at $(\pm R,0)$.  Since those points lie on $\{x_2=0\}$, neither
kind of test occurs there.  At $(0,0)$ the $x_1$-corner rules out upper tests
and the $x_2$-corner rules out lower tests.

It remains to verify the appropriate viscosity inequality at regular points of $S$.  Set
\[
 S_+=\{x_1=0\}\cup\{\rho=R\},
 \qquad
 S_-=\{x_2=0\}\cup\{x_2=R\}\cup\{x_2=-R\}.
\]
Let $x_0 \in S_+$ be a regular point of $S$, let $T$ be the tangent line to $S$ at $x_0$, and let
$\varphi\in C^2$ touch $u$ from below at $x_0$.  Restriction to $T$ gives
\[
 D_T^2\varphi(x_0)\leq D_T^2u(x_0)=D_T^2v(x_0).
\]
By Lemma~\ref{lem:compression}, monotonicity of $\phase_1$, and
\eqref{eq:2d-positive-trace},
\begin{align*}
 \phase_2(D^2\varphi(x_0))
 &\leq\frac{\pi}{2}+\phase_1(D_T^2\varphi(x_0))\\
 &\leq\frac{\pi}{2}+\phase_1(D_T^2v(x_0))
 =\psi(x_0).
\end{align*}
Thus $u$ is a viscosity supersolution at $x_0$.

Now let $x_0 \in S_-$ be a regular point of $S$, and let $\varphi\in C^2$ touch $u$
from above at $x_0$.  Then
\[
 D_T^2\varphi(x_0)\geq D_T^2u(x_0)=D_T^2v(x_0),
\]
and the other half of Lemma~\ref{lem:compression}, together with
\eqref{eq:2d-negative-trace}, gives
\begin{align*}
 \phase_2(D^2\varphi(x_0))
 &\geq-\frac{\pi}{2}+\phase_1(D_T^2\varphi(x_0))\\
 &\geq-\frac{\pi}{2}+\phase_1(D_T^2v(x_0))
 =\psi(x_0).
\end{align*}
Thus $u$ is a viscosity subsolution at $x_0$.  At every intersection of a
positive and a negative singular hypersurface, both types of test have already
been excluded, so both viscosity conditions are vacuous.  This proves the
equation throughout $B_1$.

Finally, since $\rho\geq1>R$ on $\partial B_1$, the term
$C(R-\rho(x))^+$ is compactly supported in $B_1$.  Hence $u=g$ on
$\partial B_1$, independently of $C$.
\end{proof}

The solutions furnished by Theorem~\ref{thm:2d} are pairwise distinct, since
for two parameter values $C\neq C'$ their values at the origin differ by
$(C-C')R$.

\section{Dimensions three and higher}\label{sec:higher-dim}

We now give a construction that works uniformly for every dimension $n\geq3$.  Fix
\begin{equation*}\label{eq:nd-parameters}
 n\geq3,
 \qquad 1<\alpha<2,
 \qquad 0<R<1, \qquad m \in \{ 1, \dots, n-2\}
\end{equation*}
and set
\begin{equation*}\label{eq:nd-rho-v}
 \rho(x)=\sum_{j=1}^n|x_j|,
 \qquad
 v(x)=\sum_{j=1}^m|x_j|^\alpha-\sum_{j=m+1}^n|x_j|^\alpha+|\rho(x)-R|^\alpha.
\end{equation*}
Let
\begin{equation*}\label{eq:nd-S}
 S=\bigcup_{j=1}^n\{x_j=0\}\cup\{\rho=R\}.
\end{equation*}
On $\overline{B_1}\setminus S$, define
\begin{equation}\label{eq:nd-phase-def}
 \psi(x)=\phase_n(D^2v(x)).
\end{equation}

\begin{proposition}\label{prop:nd-phase}
The phase in \eqref{eq:nd-phase-def} extends uniquely to a function
\[
 \psi\in C\left(\overline{B_1};\left(-n\frac{\pi}2,n\frac{\pi}2\right)\right).
\]
Moreover,
\begin{equation*}\label{eq:nd-origin-phase}
 \psi(0)=-(n-2m)\frac{\pi}{2}.
\end{equation*}
\end{proposition}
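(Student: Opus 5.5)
The approach is to put $D^2v$ in the normal form of Lemma~\ref{lem:rank-one-formula} and then rewrite each term of \eqref{eq:rank-one-phase} as a function that is manifestly continuous up to $S$.

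\textbf{Normal form off $S$.} On $\overline{B_1}\setminus S$, write $c_\alpha=\alpha(\alpha-1)$. A direct computation gives
\[
 D^2v=D+b\,\sigma\otimes\sigma,
\]
where the ingredients are as follows.
\begin{itemize}
\item $D=\operatorname{diag}(d_1,\dots,d_n)$.
\item $d_j=c_\alpha|x_j|^{\alpha-2}$ for $j\le m$, and $d_j=-c_\alpha|x_j|^{\alpha-2}$ for $j>m$.
\item $b=c_\alpha|\rho-R|^{\alpha-2}>0$.
\item $\sigma=(\sgn x_1,\dots,\sgn x_n)\in\{\pm1\}^n$.
\end{itemize}
Lemma~\ref{lem:rank-one-formula} then gives
\[
 \psi=\sum_{j=1}^n\arctan d_j+\Arg\!\left(b^{-1}+Q+iP\right),
\]
using $\Arg(1+bQ+ibP)=\Arg(b^{-1}+Q+iP)$ since $b>0$. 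Here $P$ and $Q$ are as in that lemma.

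\textbf{Each term extends continuously.} I would check the pieces one at a time.
\begin{itemize}
\item $\arctan d_j$ is a continuous function of $x_j$ on $\{x_j\neq0\}$. It tends to $\pi/2$ (for $j\le m$) or $-\pi/2$ (for $j>m$) as $x_j\to0$, because $|d_j|\to\infty$.
\item The summands of $P$ and $Q$ are
\[
 \frac1{1+d_j^2}=\frac{|x_j|^{4-2\alpha}}{|x_j|^{4-2\alpha}+c_\alpha^2},
 \qquad
 \frac{d_j}{1+d_j^2}=\pm\frac{c_\alpha|x_j|^{2-\alpha}}{|x_j|^{4-2\alpha}+c_\alpha^2}.
\]
Both are continuous in $x_j$ and vanish at $x_j=0$. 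The jump of $\sgn x_j$ never enters, since only $d_j$ appears and $d_j$ depends on $|x_j|$.
\item $b^{-1}=c_\alpha^{-1}|\rho-R|^{2-\alpha}$ is continuous on $\overline{B_1}$ and vanishes exactly on $\{\rho=R\}$.
\end{itemize}
So $z(x):=b^{-1}+Q+iP$ is continuous on $\overline{B_1}$. We have $P\ge0$, with $P=0$ only where every $x_j=0$, that is, only at the origin. At the origin $z=R^{2-\alpha}/c_\alpha>0$.

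Hence $z$ takes values in the closed upper half-plane minus $(-\infty,0]$. On that set the branch of $\Arg$ with values in $[0,\pi)$ is continuous, and it agrees with the branch $(0,\pi)$ used in Lemma~\ref{lem:rank-one-formula} wherever $P>0$. Therefore $\psi$ extends continuously to $\overline{B_1}$. The extension is unique because $S$ is a finite union of hypersurfaces, so $\overline{B_1}\setminus S$ is dense.

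\textbf{Value at the origin and range.} At $0$ we have $m$ terms equal to $+\pi/2$, $n-m$ terms equal to $-\pi/2$, and $\Arg=0$. This gives $\psi(0)=-(n-2m)\frac{\pi}{2}$.

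For the range, use the signs of the pieces:
\begin{itemize}
\item for $j\le m$, the extended $\arctan d_j$ lies in $(0,\pi/2]$;
\item for $j>m$, it lies in $[-\pi/2,0)$;
\item the $\Arg$ term lies in $[0,\pi)$.
\end{itemize}
Therefore
\[
 -(n-m)\frac\pi2\le\psi<m\frac\pi2+\pi=(m+2)\frac\pi2.
\]
Since $m\ge1$ and $m\le n-2$, this gives $\psi\in(-n\frac\pi2,n\frac\pi2)$ strictly.

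\textbf{Main obstacle.} The delicate step is the Arg term on $\{\rho=R\}$, where $b\to\infty$, intersected with coordinate hyperplanes, where $P$ and $Q$ degenerate. The fix is to divide by $b$ and observe that $b^{-1}+Q+iP$ can only reach the real axis at the origin, where it is positive. This rules out both vanishing and a jump of the branch.
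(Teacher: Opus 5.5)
Your proposal is correct and follows the paper's proof essentially step for step: the same decomposition $D^2v=D+b\,\sigma\otimes\sigma$, Lemma~\ref{lem:rank-one-formula}, division by $b$ to handle $\{\rho=R\}$, and the same sign count for the range and for the value at the origin. The only difference is cosmetic. You treat $b^{-1}+Q+iP$ as one explicitly continuous function on all of $\overline{B_1}$ that meets the real axis only at the origin, where it is positive. This replaces the paper's case split between coordinate strata away from $\{\rho=R\}$ and points of $\{\rho=R\}$.
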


\begin{proof}
Let $p=2-\alpha\in(0,1)$ and $\cA=\alpha(\alpha-1)$.  On a fixed orthant,
write $\sigma_j=\sgn x_j$.  Then
\begin{equation*}\label{eq:nd-hessian}
 D^2v=D+b\,\sigma\otimes\sigma,
\end{equation*}
where
\begin{equation*}\label{eq:nd-d-b}
 D=\operatorname{diag}(d_1,\dots,d_n),
 \quad d_j=\begin{cases} \cA |x_j|^{-p} & j \leq m \\-\cA|x_j|^{-p}\ & j\geq m+1, \end{cases}
 \quad b=\cA|\rho-R|^{-p}.
\end{equation*}
By Lemma \ref{lem:rank-one-formula},
\begin{equation}\label{eq:nd-phase-formula}
 \psi(x)=\sum_{j=1}^n\arctan d_j+
 \Arg(1+bQ+i bP),
\end{equation}
with
\begin{equation*}\label{eq:PQ}
 P=\sum_{j=1}^n\frac{1}{1+d_j^2},
 \qquad
 Q=\sum_{j=1}^n\frac{d_j}{1+d_j^2}.
\end{equation*}
In particular, the right-hand side is independent of the signs $\sigma_j$.

Suppose first that $y \in S \setminus \{\rho=R\}$.  As $x_j\to0$, the corresponding
$d_j$ tends to $+\infty$ for $j \leq m $ and to $-\infty$ for $j\geq m+1$, while its
contributions to $P$ and $Q$ tend to zero.  Thus every term in
\eqref{eq:nd-phase-formula} has a limit.  If all coordinates tend to zero,
then $P,Q\to0$ and the last argument tends to zero because $b$ remains finite.
If at least one coordinate is nonzero, then the limiting $P$ is positive and
$1 + bQ + i b P $ remains in the upper half-plane.  This gives a continuous
extension across all coordinate strata away from $\{\rho=R\}$.

Now let $y \in\{\rho=R\}$.  Rewrite the last term as
\[
 \Arg(b^{-1}+Q+iP).
\]
At least one coordinate of $y$ is nonzero because $R>0$, and hence the
limiting value of $P$ is strictly positive.  Since $b^{-1}\to0$, this argument
also has a limit as $x \to y$, including at all intersections with
coordinate hyperplanes. Formula \eqref{eq:nd-phase-formula} therefore defines
a continuous extension to $\overline{B_1}$.

For the range, observe that $d_j>0$ for $j \leq m$, $d_j<0$ for $j\geq m+1 $, and the argument term
in \eqref{eq:nd-phase-formula} belongs to $[0,\pi]$ in the limiting
formula. Hence
\[
 \psi> -(n-m)\frac{\pi}{2}>-n\frac{\pi}{2},
 \qquad
 \psi<(m+2)\frac{\pi}{2}\leq n\frac{\pi}{2}.
\]
The upper inequality is strict because the negative diagonal
terms contribute a strictly negative amount.  Finally, at the origin all
$d_j$ diverge, $b$ remains finite, and $P,Q\to0$.  Thus
\[
 \psi(0)=m\frac{\pi}{2}-(n-m)\frac{\pi}{2}
 =-(n-2m)\frac{\pi}{2}. \qedhere
\]
\end{proof}

We next identify the phase trace on the strata where a viscosity test may
occur.  Given $x \in S$, set
\[
 Z(x)=\{j \: :\: x_j=0\}.
\]
We call $\{x_j=0\}$ with $j \leq m$ and $\{\rho=R\}$ the positive singular hypersurfaces,
and $\{x_j=0\}$, $j\geq m+1$, the negative singular hypersurfaces.  A stratum is
\emph{pure positive} if it lies in at least one positive hypersurface and in no
negative one; it is \emph{pure negative} if it lies in at least one negative
hypersurface and in no positive one.  All remaining singular strata are mixed.
On a pure stratum, let $T$ denote its tangent space.

\begin{lemma}\label{lem:phase-trace}
Let $x\in S$.

\begin{enumerate}
\item If $x$ lies on a pure positive stratum of codimension $k\in\{|Z(x)|, |Z(x)|+1\}$,
then
\begin{equation}\label{eq:positive-trace}
 \psi(x)=k\frac{\pi}{2}+\phase_{n-k}\big(D_T^2v(x)\big).
\end{equation}
\item If $x$ lies on a pure negative stratum of codimension
$k=|Z(x)|$, then
\begin{equation}\label{eq:negative-trace}
 \psi(x)=-k\frac{\pi}{2}+\phase_{n-k}\big(D_T^2v(x)\big).
\end{equation}
\end{enumerate}
\end{lemma}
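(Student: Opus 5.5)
We compute $\psi(x)$ from the limit formula \eqref{eq:nd-phase-formula} and compare it with the phase of the tangential compression $D_T^2v(x)$. Fix $x\in S$ and let $Z=Z(x)$.

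\textbf{Setup.} On a pure positive stratum, $Z\subset\{1,\dots,m\}$. Either $\rho(x)\neq R$, in which case $k=|Z|$, or $\rho(x)=R$, in which case $k=|Z|+1$. On a pure negative stratum, $Z\subset\{m+1,\dots,n\}$ and $\rho(x)\neq R$, so $k=|Z|$. In every case $T$ is the intersection of the hyperplanes $\{x_j=0\}$, $j\in Z$, together with $\{\rho=R\}$ when $\rho(x)=R$. We work within a fixed adjacent orthant. There $v$ is smooth when restricted to the stratum, and $D_T^2v(x)$ is the limit of the compressions of $D^2v$ to $T$ along approaching points. This limit exists because $d_j$ for $j\notin Z$ and, when $\rho(x)\neq R$, also $b$ stay bounded. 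Concretely, $D_T^2v$ is the restriction of $D'+b\,\sigma'\otimes\sigma'$ to $T$, where $D'=\operatorname{diag}(d_j)_{j\notin Z}$.

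\textbf{Case $\rho(x)\neq R$.} Here $T=\operatorname{span}\{e_j:j\notin Z\}$ and $D_T^2v(x)=D'+b\,\sigma'\otimes\sigma'$ with finite entries. Apply Lemma~\ref{lem:rank-one-formula} in dimension $n-k$ to get
\[
\phase_{n-k}(D_T^2v)=\sum_{j\notin Z}\arctan d_j+\Arg(1+bQ'+ibP').
\]
Here $P'$ and $Q'$ are the sums over $j\notin Z$, and $P'>0$ because $Z\neq\{1,\dots,n\}$ (the origin has both signs, so it is mixed). As $x_j\to0$ for $j\in Z$, we have $\arctan d_j\to\pm\frac{\pi}{2}$, with sign $+$ in the positive case and $-$ in the negative case. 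The contributions of these $j$ to $P$ and $Q$ vanish. Substituting into \eqref{eq:nd-phase-formula} gives $\psi(x)=\pm k\frac{\pi}{2}+\phase_{n-k}(D_T^2v(x))$.

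\textbf{Case $\rho(x)=R$ (positive only).} Now $b\to\infty$ along the normal direction $\sigma$, and $\Arg(b^{-1}+Q+iP)\to\Arg(Q_0+iP_0)$. The sums $P_0$ and $Q_0$ run over $j\notin Z$, since the contributions from $j\in Z$ vanish. I would not manipulate this argument directly. Instead, apply Lemma~\ref{lem:rank-one-limit} to $B=D'$ in $\R^{n-|Z|}$ with $\nu=\sigma'/|\sigma'|$ and $t=b|\sigma'|^2\to\infty$. This yields
\[
\phase_{n-|Z|}(D'+b\,\sigma'\otimes\sigma')\to\frac{\pi}{2}+\phase_{n-k}(D'|_{\sigma'^\perp}).
\]
The compression $D'|_{\sigma'^\perp}$ is exactly $D_T^2v(x)$, because the rank-one term vanishes on $T$ and $T$ equals $\sigma'^\perp$ inside $\operatorname{span}\{e_j:j\notin Z\}$. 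We also need the left-hand side to match $\psi$ minus $|Z|\frac{\pi}{2}$ in the limit. This follows from the continuity argument of Proposition~\ref{prop:nd-phase}: by Lemma~\ref{lem:rank-one-formula} the left-hand side equals $\sum_{j\notin Z}\arctan d_j+\Arg(1+bQ'+ibP')$, and this differs from $\psi$ by $\sum_{j\in Z}\arctan d_j\to|Z|\frac{\pi}{2}$, up to terms that vanish in $P$ and $Q$. Combining the two limits gives $\psi(x)=(|Z|+1)\frac{\pi}{2}+\phase_{n-k}(D_T^2v)$.

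\textbf{Main obstacle.} The delicate point is the junction of $\{\rho=R\}$ with coordinate hyperplanes, where $b$ and some $d_j$ diverge simultaneously and possibly at different rates. I would handle it by the uniform comparison above. The quantity $\Arg(b^{-1}+Q+iP)$ depends continuously on $(b^{-1},P,Q)$ near $(0,P_0,Q_0)$ with $P_0>0$, so the order of the limits does not matter. The identification of the compression with $D_T^2v$ should be checked in a fixed orthant; it is sign-independent by the remark following \eqref{eq:nd-phase-formula}.
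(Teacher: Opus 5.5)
Your proposal is correct and follows essentially the same route as the paper. Away from $\{\rho=R\}$ you read off the $\pm\frac{\pi}{2}$ contributions of the singular $d_j$ from \eqref{eq:nd-phase-formula} and identify what remains as the phase of the tangential Hessian; on $\{\rho=R\}$ you combine Lemmas~\ref{lem:rank-one-formula} and~\ref{lem:rank-one-limit} for $D'+t\,\sigma'\otimes\sigma'$ and identify $T$ with $\{0\}^{|Z|}\times(\sigma')^\perp$, exactly as the paper does. The differences are cosmetic: you make explicit the use of Lemma~\ref{lem:rank-one-formula} in dimension $n-k$ and the interchange of limits (via continuity of $\Arg(b^{-1}+Q+iP)$ near $P_0>0$), and you fold the case $Z=\emptyset$ on $\{\rho=R\}$ into the junction case instead of treating it separately.
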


\begin{proof}
On a pure negative stratum, $b$ and all nonsingular $d_j$ have finite limits,
whereas precisely $k$ diagonal entries tend to $-\infty$. In \eqref{eq:nd-phase-formula}, the contributions of these $k$ singular diagonal entries to $P$ and $Q$ in go to zero, and they contribute
$-k\frac{\pi}{2}$ to the first sum; what remains is exactly the phase of the
tangential Hessian. This proves \eqref{eq:negative-trace}.

On a pure positive stratum away from $\{\rho=R\}$, the same
argument gives $k$ contributions of $+\frac{\pi}{2}$ while again the singular terms contribute nothing to $P$ and $Q$. On $\{\rho=R\}$ away from all
coordinate hyperplanes, \eqref{eq:positive-trace} follows from Lemma
\ref{lem:rank-one-limit}. Next, consider a pure positive stratum which lies on the set $\{ \rho =R\}$. All coordinates $x_{m+1},\dots,x_n$ are nonzero. After relabeling the coordinates, if necessary, suppose that the first $k-1$ coordinates of $x$ are zero on this stratum ($k-1 \leq m)$. Divide by $b$ in
the second term of \eqref{eq:nd-phase-formula} and write it as
$\Arg(b^{-1}+Q+iP)$. For $j\leq k-1$, the $d_j$ contributions to $P,Q$ vanish at $x$, and $b^{-1} \to 0$ at $x$, so the argument term converges to
$\Arg(Q'+iP')$, where $P',Q'$ are formed from the remaining coordinates $x_j$, $j\geq k$. Put
\[
 D'=\operatorname{diag} (d_{k} , \dots, d_n),
 \qquad \sigma'=(\sigma_{k}, \dots, \sigma_n).
\]
By Lemmas~\ref{lem:rank-one-formula} and \ref{lem:rank-one-limit}, after normalizing $\sigma'$,
\begin{align*}
 \sum_{j =k}^n \arctan d_j+\Arg(Q'+iP')
 &=\lim_{t\to+\infty}\phase_{n-(k-1)}(D'+t\,\sigma'\otimes\sigma')\\
 &=\frac{\pi}{2}
   +\phase_{n-k}\big(D'|_{(\sigma')^\perp}\big).
\end{align*}
The first sum in \eqref{eq:nd-phase-formula} also contributes $(k-1)\frac{\pi}{2}$ from
the singular $d_j$, and
$\{0\}^{k-1}\times(\sigma')^\perp$ is exactly the tangent space of the stratum. This proves \eqref{eq:positive-trace}.
\end{proof}

Let $L=4n$.  For $C\in[0,1]$, define
\begin{equation}\label{eq:nd-family}
 u_C(x)=v(x)+L \sum_{j=1}^m|x_j|-L\sum_{j=m+1}^n|x_j|+C(R-\rho(x))^+,
\end{equation}
and define the common boundary value
\begin{equation}\label{eq:nd-boundary}
 g(x)=v(x)+L \sum_{j=1}^m|x_j|-L\sum_{j=m+1}^n|x_j|,
 \qquad\text{on }\partial B_1.
\end{equation}
Again $\rho\geq1>R$ on $\partial B_1$, so the tent term vanishes there.

\begin{theorem}\label{thm:nd}
For every $n\geq3$ and every $C\in[0,1]$, the function $u_C$ in
\eqref{eq:nd-family} is a viscosity solution of
\begin{equation*}\label{eq:nd-DP}
 \phase_n(D^2u)=\psi(x)\quad\text{in }B_1,
 \qquad u=g\quad\text{on }\partial B_1.
\end{equation*}
The family $\{u_C:0\leq C\leq1\}$ consists of pairwise distinct solutions.
\end{theorem}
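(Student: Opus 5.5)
The plan mirrors the proof of Theorem~\ref{thm:2d}. First, off $S$ the added terms $L\sum_{j\le m}|x_j|-L\sum_{j>m}|x_j|+C(R-\rho)^+$ are affine on each component of $B_1\setminus S$. Hence $D^2u_C=D^2v$ there, and $u_C$ is a classical solution there by \eqref{eq:nd-phase-def}. The boundary condition holds because $\rho\ge 1>R$ on $\partial B_1$. Distinctness follows from $u_C(0)-u_{C'}(0)=(C-C')R$. Everything therefore reduces to verifying the viscosity inequalities at points of $S$.

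The first step is to exclude test functions using one-sided derivatives. Across $\{x_j=0\}$ with $j\le m$, the one-sided limits of $\partial_j u_C$ are $\pm\bigl(L+\alpha|\rho-R|^{\alpha-1}\sgn(\rho-R)-C\1_{\{\rho<R\}}\bigr)$ evaluated on the hyperplane, with contributions from the other $|x_i|^\alpha$ terms vanishing. Since $|\rho-R|\le n+1$ in $B_1$, we have $\alpha|\rho-R|^{\alpha-1}\le 2(n+1)^{\alpha-1}<2n+2$. Together with $C\le 1$ and $L=4n$, the bracket stays strictly positive. So there is a strict convex corner, and no upper test exists on positive coordinate hyperplanes. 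Symmetrically, for $j\ge m+1$ the bracket is $-L+O(n)<0$, a concave corner, and no lower test exists. On $\{\rho=R\}$, restricting along a transversal coordinate direction gives, modulo smooth terms, $|t|^\alpha+C(\mp t)^+$. Because $\alpha<2$, or by the convex corner when $C>0$, this admits no upper $C^2$ test. Consequently, on any mixed stratum both kinds of test are excluded and both conditions hold vacuously. This includes the origin, where $m\ge1$ and $n-m\ge2$ give both types of hyperplanes.

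Next, I would treat the pure strata with Lemma~\ref{lem:phase-trace}. On a pure positive stratum only lower tests $\varphi$ can occur. Restricting to the tangent space $T$, the function $u_C$ is $C^2$ along $T$, since the affine additions are linear on $T$ (the relevant $x_j$ vanish on $T$, and $\rho$ is affine along the stratum within an orthant closure). Hence $D_T^2\varphi(x_0)\le D_T^2 u_C(x_0)=D_T^2v(x_0)$. Lemma~\ref{lem:compression} with codimension $k$, monotonicity, and \eqref{eq:positive-trace} then give $\phase_n(D^2\varphi)\le k\frac\pi2+\phase_{n-k}(D_T^2v)=\psi(x_0)$. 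Pure negative strata are handled identically using upper tests, the lower bound in Lemma~\ref{lem:compression}, and \eqref{eq:negative-trace}. Here I must check that the codimension of the stratum matches the $k$ in Lemma~\ref{lem:phase-trace}: it is $|Z(x)|$ off $\{\rho=R\}$ and $|Z(x)|+1$ on it.

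The main obstacle is a careful justification that $D_T^2 u_C(x_0)=D_T^2 v(x_0)$ exists on pure positive strata inside $\{\rho=R\}$, where $v$ contains $|\rho-R|^\alpha$ and $|x_j|^\alpha$ pieces. I would argue that along $T$ we have $\rho\equiv R$ and $x_j\equiv0$ for $j\in Z$, so these singular pieces vanish identically to second order on the stratum. A second-order Taylor expansion of the remaining smooth part then gives the tangential Hessian, and it agrees with the $D'|_{(\sigma')^\perp}$ compression used in Lemma~\ref{lem:phase-trace}. A secondary point is to confirm that the constant $L=4n$ dominates the $\alpha|\rho-R|^{\alpha-1}+C$ terms uniformly in $B_1$, which the estimate above provides.
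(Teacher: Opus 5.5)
Your proposal is correct and follows essentially the same route as the paper: slope corners on the positive hypersurfaces rule out upper tests and on the negative hyperplanes rule out lower tests, so mixed strata are vacuous; on pure strata you restrict a test to the stratum and combine Lemma~\ref{lem:compression} with Lemma~\ref{lem:phase-trace}. Your cruder bound $\alpha|\rho-R|^{\alpha-1}<2n+2$ still leaves a margin of $2n-3>0$ against $L=4n$, and your justification that $D_T^2u_C(x_0)=D_T^2v(x_0)$ is exactly the paper's remark that the added terms are affine on each stratum.
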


\begin{proof}
On every connected component of $B_1\setminus S$, the function $u_C-v$ is
affine.  Hence $D^2u_C=D^2v$ there, and $u_C$ solves the equation classically.
We analyze points of $S$.

\smallskip
\noindent\emph{Step 1: exclusion of test functions.}
Suppose $x_{0,j} =0$ for some $j \leq m$. Varying $x_j=t$, while keeping all other coordinates fixed, the coefficient of $|t|$ in
the one-dimensional expansion of $u_C$ is at least
\[
 L-\alpha R^{\alpha-1}-C\geq4n-3>0.
\]
Thus the graph has a strict upward corner in the $e_j$ direction, and no
$C^2$ function can touch $u_C$ from above at $x_0$.

Next suppose $x_{0,j}=0$ for some $j\geq m+1$.  Varying $x_j=t$, the coefficient
of $|t|$ away from $\rho=R$ is
\[
 -L+\alpha|\rho(x_0)-R|^{\alpha-1}\sgn(\rho(x_0)-R)-C\1_{\{s<R\}}.
\]
Since $\rho(x_0)\leq\sqrt n$ in $B_1$,
\[
 -L+\alpha((\rho(x_0)-R)^+)^{\alpha-1}
 \leq-4n+2\sqrt n<0.
\]
If $\rho(x_0)=R$, the terms $-|t|^\alpha$ and $|\rho(x_0)+|t|-R|^\alpha$ cancel and the
leading corner is still $-L|t|$.  Thus no $C^2$ function can touch $u_C$ from
below at a point of any negative coordinate hyperplane.

Finally suppose $\rho(x_0)=R$. If $x_0$ is not already covered by a positive coordinate hyperplane, choose a nonzero coordinate, say $x_1=t$, and vary it transversely to the
interface while staying in one orthant. 
Up to a smooth function, the one-dimensional
restriction is
\[
 |t|^\alpha+C(-t \sgn x_{0,1})^+,
\]
This function admits no upper $C^2$ test at zero.  Indeed, if
$q(t)=pt+O(t^2)$ touched it from above, the inequality for $t>0$ would give
$p\geq0$, while the inequality for $t=-s<0$ would give $p\leq-C$.  This is
impossible if $C>0$; if $C=0$, then $p=0$, and $O(t^2)$ cannot dominate
$a|t|^\alpha$ because $\alpha<2$. Therefore every positive singular
hypersurface rules out tests from above, and every negative singular
hypersurface rules out tests from below.
At a mixed stratum both kinds of test are absent, so both viscosity conditions
are vacuous.

\smallskip
\noindent\emph{Step 2: pure positive strata.}
Let $x_0$ lie on a pure positive stratum of codimension $k$, and let
$\varphi\in C^2$ touch $u_C$ from below at $x_0$.  The restriction of
$\varphi$ to the stratum touches the smooth restriction of $u_C$ from below.
Since the added absolute-value and tent terms are affine when restricted to a
fixed stratum,
\[
 D_T^2\varphi(x_0)\leq D_T^2u_C(x_0)=D_T^2v(x_0).
\]
By Lemma \ref{lem:compression}, monotonicity of $\phase_{n-k}$, and Lemma
\ref{lem:phase-trace},
\begin{align*}
 \phase_n(D^2\varphi(x_0))
 &\leq\frac{k\pi}{2}+\phase_{n-k}(D_T^2\varphi(x_0))\\
 &\leq\frac{k\pi}{2}+\phase_{n-k}(D_T^2v(x_0))
 =\psi(x_0).
\end{align*}
This is precisely the supersolution inequality.

\smallskip
\noindent\emph{Step 3: pure negative strata.}
Let $x_0$ lie on a pure negative stratum of codimension $k$, and let
$\varphi$ touch $u_C$ from above.  Restricting to the stratum gives
\[
 D_T^2\varphi(x_0)\geq D_T^2u_C(x_0)=D_T^2v(x_0).
\]
The other half of Lemma \ref{lem:compression} and
\eqref{eq:negative-trace} imply
\begin{align*}
 \phase_n(D^2\varphi(x_0))
 &\geq-k\frac{\pi}{2}+\phase_{n-k}(D_T^2\varphi(x_0))\\
 &\geq-k\frac{\pi}{2}+\phase_{n-k}(D_T^2v(x_0))
 =\psi(x_0).
\end{align*}
Thus $u_C$ is a subsolution.  Together with Steps 1-3, this proves that
$u_C$ is a viscosity solution throughout $B_1$.

The boundary condition follows from \eqref{eq:nd-boundary}.  The solutions are
distinct because
\[
 u_C(0)=R^\alpha+CR. \qedhere
\]
\end{proof}

\noindent \textbf{Remark.}
The construction above produces phase functions which achieve the special phase values $-(n-2m) \frac{\pi}{2}$ for $1 \leq m \leq n-2$, but the positive critical value $(n-2)\frac{\pi}{2}$ is missing. The construction above does not work for the top value without modification, since with $m=n-1$, the intersection $\bigcap_{j=1}^m \{x_j =0 \} \cap \{ \rho =R\}$ is a pure positive stratum of codimension $n$, so the phase $\psi$ would achieve the value $n\frac{\pi}{2}$. This violates the requirement that $\psi \in C ( \overline {B_1} ; (-n\frac{\pi}{2},n\frac{\pi}{2}))$. However, the positive critical value of the phase can be achieved by taking $m=1$ and replacing  $v, \psi, u_C$, and $g$ with the opposite sign.

\smallskip 

We finish by verifying that the continuous phase in the higher-dimensional
construction genuinely crosses the special value.

\begin{proposition}\label{prop:critical-crossing}
For the phase of Proposition \ref{prop:nd-phase}, there exists $t_0>0$ such
that
\[
 \psi(te_1)<-(n-2m)\frac{\pi}{2}<\psi(te_n)
 \qquad\text{for }0<t<t_0.
\]
More precisely, if $p=2-\alpha$ and $\cA=\alpha(\alpha-1)$, then
\begin{align}
 \psi(te_1)&=-(n-2m)\frac{\pi}{2}-\frac{t^p}{\cA}+O(t^{2p}),
 \label{eq:e1-expansion}\\
 \psi(te_n)&=-(n-2m)\frac{\pi}{2}+\frac{t^p}{\cA}+O(t^{2p}).
 \label{eq:e2-expansion}
\end{align}
\end{proposition}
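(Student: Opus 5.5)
The plan is to evaluate the continuous extension of $\psi$ directly on the coordinate axes using the limiting form of the rank-one formula \eqref{eq:nd-phase-formula}, exactly as in the proof of Proposition~\ref{prop:nd-phase}, and then Taylor expand.

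\emph{Reduction on the $x_1$-axis.} Take $0<t<t_0\le R/2$, so that $te_1$ lies on coordinate strata but not on $\{\rho=R\}$. By the proof of Proposition~\ref{prop:nd-phase}, $\psi(te_1)$ is the limit of \eqref{eq:nd-phase-formula} as $x_j\to0$ for $j\ge2$.
\begin{itemize}
\item The vanishing coordinates with $2\le j\le m$ contribute $(m-1)\frac{\pi}{2}$ to the first sum.
\item Those with $j\ge m+1$ contribute $-(n-m)\frac{\pi}{2}$.
\item None of these $j\ge2$ contribute to $P$ or $Q$ in the limit.
\end{itemize}
The surviving quantities are
\[
d_1=\cA t^{-p},\qquad P=\frac{1}{1+d_1^2},\qquad Q=\frac{d_1}{1+d_1^2},\qquad b=\cA(R-t)^{-p}.
\]
Here $b$ is bounded above and below for $t\le R/2$. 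Therefore
\[
\psi(te_1)=-(n-2m)\frac{\pi}{2}-\frac{\pi}{2}+\arctan d_1+\Arg(1+bQ+ibP).
\]

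\emph{Expansion on the $x_1$-axis.} Since $d_1\to+\infty$, we use $\arctan d_1=\frac{\pi}{2}-\frac{1}{d_1}+O(d_1^{-3})$. This gives
\[
-\frac{\pi}{2}+\arctan d_1=-\frac{t^p}{\cA}+O(t^{3p}).
\]
For the argument term, note that $bQ=O(t^p)$, so $1+bQ\to1$. Also $bP=O(d_1^{-2})=O(t^{2p})$. Hence $1+bQ+ibP$ lies near $1$ in the upper half-plane. On the branch in $(0,\pi)$ fixed by Lemma~\ref{lem:rank-one-formula}, its argument is $\arctan\frac{bP}{1+bQ}=O(t^{2p})$. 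Combining the two estimates gives \eqref{eq:e1-expansion}.

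\emph{The $x_n$-axis.} The computation is symmetric.
\begin{itemize}
\item The vanishing coordinates $j\le m$ give $m\frac{\pi}{2}$.
\item The vanishing coordinates $m+1\le j\le n-1$ give $-(n-m-1)\frac{\pi}{2}$.
\item The surviving entry is $d_n=-\cA t^{-p}$, with $\arctan d_n=-\frac{\pi}{2}+\frac{t^p}{\cA}+O(t^{3p})$.
\end{itemize}
Now $Q<0$ but still $Q=O(t^p)$, and $P=O(t^{2p})$. So $1+bQ$ stays near $1$ and the argument term is again $O(t^{2p})$. This yields \eqref{eq:e2-expansion}.

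\emph{Conclusion and main difficulty.} Since $p<2p$, the $\mp t^p/\cA$ term dominates the $O(t^{2p})$ error for $t$ small. Choosing $t_0$ accordingly gives the strict inequalities. The only delicate point is justifying that $\psi$ at a point of a coordinate stratum equals this limiting formula with the correct branch. I would handle this by citing the continuity argument of Proposition~\ref{prop:nd-phase}: there $P>0$ because $x_1\neq0$, and $b$ remains finite.
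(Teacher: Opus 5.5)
Your proposal is correct and follows the paper's proof essentially step for step. You evaluate the limiting rank-one formula \eqref{eq:nd-phase-formula} on the axes, let each vanishing coordinate contribute $\pm\frac{\pi}{2}$, expand $\arctan$ of the one surviving diagonal entry to first order, and bound the argument term by $O(t^{2p})$. Your added remarks on the branch (using $1+bQ>0$ for small $t$) and on $b$ staying bounded for $t\le R/2$ make explicit what the paper leaves implicit.
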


\begin{proof}
The values on the axes are understood through the continuous extension in
\eqref{eq:nd-phase-formula}.  Put $s=\cA t^{-p}$ and
$b_t=\cA(R-t)^{-p}$.  Along the $x_1$-axis, the terms $\arctan d_j$ contribute $+ \frac{\pi}{2}$ for $2 \leq j \leq m$ and $- \frac{\pi}{2} $ for $j \geq m+1$. Therefore
\[
 \psi(te_1)=\arctan s+(m-1)\frac{\pi}{2}-(n-m) \frac \pi 2
 +\Arg\left(1+b_t\frac{s}{1+s^2}
              +i b_t\frac{1}{1+s^2}\right).
\]
Since
\[
 \arctan s=\frac{\pi}{2}-\frac1s+O(s^{-3}),
 \qquad
 \Arg\left(1+b_t\frac{s}{1+s^2}
              +i b_t\frac{1}{1+s^2}\right)=O(s^{-2}),
\]
we obtain \eqref{eq:e1-expansion}.  Along the $x_n$-axis, the finite entry is
$-s$, while the positive entries contribute $m \frac{\pi}{2}$ and the remaining negative
entries contribute $-(n-(m+1))\frac{\pi}{2}$. Hence
\[
 \psi(te_n)=-(n-2m)\frac{\pi}{2}+\frac1s+O(s^{-2}),
\]
which is \eqref{eq:e2-expansion}.  The asserted inequalities follow for small
$t$.
\end{proof}
We now present the proof of Theorem~\ref{thm:DP}.

\begin{proof}[Proof of Theorem~\ref{thm:DP}]
For $n=2$, the conclusion follows from Proposition \ref{lem:2d-phase} and Theorem~\ref{thm:2d}. For $n\geq3$ and $1\leq m\leq n-2$, it follows from Proposition~\ref{prop:nd-phase}, Theorem~\ref{thm:nd},  and Proposition~\ref{prop:critical-crossing}.

For $m=n-1$,  start with the construction for $m=1$, and let $\Pi$ be the permutation interchanging $e_1$ and $e_n$. Let
\[
 \widetilde v(x)=-v(\Pi x),\qquad
 \widetilde\psi(x)=-\psi(\Pi x),\qquad
 \widetilde u_C(x)=-u_C(\Pi x),\qquad
 \widetilde g(x)=-g(\Pi x).
\]
Since $\phase_n(-A)=-\phase_n(A)$, the functions $\widetilde u_C$ form a continuum of pairwise distinct viscosity solutions of
\[
 \phase_n(D^2\widetilde u_C)=\widetilde\psi(x)\quad\text{in }B_1,
 \qquad \widetilde u_C=\widetilde g\quad\text{on }\partial B_1.
\]
Observe that
\[
 \widetilde\psi(0)=(n-2)\frac{\pi}{2}
 =-(n-2(n-1))\frac{\pi}{2},
\]
and Proposition~\ref{prop:critical-crossing}, applied to the $m=1$ phase, gives
\[
 \widetilde\psi(te_1)=-\psi(te_n)
 <(n-2)\frac{\pi}{2}
 <-\psi(te_1)=\widetilde\psi(te_n)
\]
for all sufficiently small $t>0$. This proves the theorem.
\end{proof}

\bibliographystyle{amsalpha}
\bibliography{Library1}
\end{document}